\definecolor{green}{rgb}{0,0.8,0} 
\newtheorem{theorem}{Theorem}[section]
\newtheorem{lemma}[theorem]{Lemma}
\newtheorem{proposition}[theorem]{Proposition}
\theoremstyle{definition}
\theoremstyle{remark}
\newtheorem{remark}[theorem]{Remark}
\numberwithin{equation}{section}
\def\eps{\varepsilon }
\newcommand{\pa}{\partial}
\newcommand{\na}{\nabla}
\newcommand{\Del}{\Delta}
\newcommand{\vep}{\varepsilon}
\newcommand{\bbr}{\mathbb R}
\newcommand{\beq}{\begin{equation}}
\newcommand{\eeq}{\end{equation}}
\newcommand{\be}{\begin{equation}}
\newcommand{\ee}{\end{equation}}
\newcommand{\ben}{\begin{equation*}}
\newcommand{\een}{\end{equation*}}
\newcommand{\la}{\label}
 \newcommand{\bmat}{\begin{pmatrix}} 
  \newcommand{\emat}{\end{pmatrix}}
    \newcommand{\tilu}{{\tilde{U}}}
     \newcommand{\hatu}{{\hat{U}}}
     \newcommand{\hatn}{{\hat{n}}}
        \newcommand{\tilq}{{\tilde{q}}}
            \newcommand{\tiln}{{\tilde{n}}}
      \newcommand{\s}{\sigma}
\begin{document}
\bibliographystyle{plain}

\title{Global Well-posedness of large perturbations of traveling waves in a hyperbolic-parabolic system arising from a chemotaxis model}

\subjclass[2010]{92B05, 	35L65}%
\keywords{tumour angiogenesis; Keller-Segel; stability; contraction; global existence; traveling wave; viscous shock; large perturbation; relative entropy method; conservations laws; De Giorgi method}

\date{\today}%

\author[Choi]{Kyudong Choi}
\address[Kyudong Choi]{\newline Department of Mathematical Sciences, \newline Ulsan National Institute of Science and Technology, Ulsan 44919, Republic of Korea}
\email{kchoi@unist.ac.kr}

\author[Kang]{Moon-Jin Kang}
\address[Moon-Jin Kang]{\newline Department of Mathematics \& Research Institute of Natural Sciences, \newline Sookmyung Women's University, Seoul 04310,  Republic of Korea}
\email{moonjinkang@sookmyung.ac.kr}

\author[Vasseur]{Alexis F. Vasseur}
\address[Alexis F. Vasseur]{\newline Department of Mathematics, \newline The University of Texas at Austin, Austin, TX 78712, USA}
\email{vasseur@math.utexas.edu}

\thanks{\textbf{Acknowledgement.}  The work of KC was partially supported by NRF-2018R1D1A1B07043065 and by the POSCO Science Fellowship of POSCO TJ Park Foundation. 
The work of MK was partially supported by NRF-2019R1C1C1009355.
The work of AV was partially supported by the NSF grant: DMS 1614918. }

\begin{abstract}
  We consider a one-dimensional  system arising from a chemotaxis model in tumour angiogenesis, which is described by a Keller-Segel equation with singular sensitivity. This hyperbolic-parabolic system  is known to allow viscous shocks (so-called traveling waves), and in literature, their nonlinear stabilities have been considered   in the class of certain mean-zero small perturbations.
We show the global existence of the solution without assuming the mean-zero condition  for any  initial data as arbitrarily large perturbations around traveling waves  in the Sobolev space $H^1$  while the shock strength is assumed to be small enough.  
The main novelty of this paper is to develop the global well-posedness of any large $H^1$-perturbations of traveling wave connecting two different end states. The discrepancy of the end states is linked to the complexity of the corresponding flux, which requires a new type of an energy estimate.  
To overcome, we use the {\it a priori} contraction estimate of a weighted relative entropy functional  up to a translation, which was proved by Choi-Kang-Kwon-Vasseur  \cite{ckkv2019}. The boundedness of the shift  implies {\it a priori} bound of the relative entropy functional without a shift on any time interval of existence, which  produces a $H^1$-estimate thanks to a De Giorgi type lemma.
Moreover, to remove possibility of vacuum appearance, we use the lemma again. 
  
\end{abstract}

\maketitle

\tableofcontents
\section{Introduction and main theorem}

We consider the following one dimensional  system: 
\begin{align}\label{nq_nu} \begin{aligned}
\pa_t n -\pa_x(n q)& =\nu  \pa_{xx} n,\\
\pa_t q -\pa_xn & = 0 \quad \mbox{ for } x\in\mathbb{R} \quad\mbox{and for }t>0 
\end{aligned} \end{align}  where $\nu>0$ is a positive constant. This hyperbolic-parabolic system is closely related to  a certain Keller-Segel system (see Subsection \ref{fromchemo}).
We are interested in the global-in-time existence issue of large perturbations of traveling waves (or viscous shocks) of the above system \eqref{nq_nu}.

\subsection{Traveling waves of  \eqref{nq_nu}}
By \cite{WaHi} (also see \cite{LiWa_siam}, or see  \cite[Lemma 2.1]{ckkv2019}), it has been known that for any $\nu>0$,  \eqref{nq_nu} admits a smooth monotone traveling wave $ \tilu(x-\sigma t)=\bmat{\tiln (x-\sigma t)}\\ {\tilq (x-\sigma t)}\emat$ connecting two end-states $(n_-,q_-)$, $(n_+,q_+)\in \bbr^+\times\bbr$, \textit{i.e.}, \begin{equation}\label{bdry_cond}  {\tiln}(-\infty)=n_->0, \, \,\, \,{\tiln}(+\infty)=n_+>0,\, \,\,\,{\tilq}(-\infty)=q_-,\,\,\,\, {\tilq}(+\infty)=q_+\end{equation} (we denote $\displaystyle \lim_{x\to \pm\infty} {f}(x)$ by  ${f}(\pm\infty)$ in short), provided the two end-states satisfy the Rankine-Hugoniot condition and the Lax entropy condition: \begin{align}\begin{aligned}\label{end-con_old} &\exists~\sigma\in\bbr \mbox{ such that }~\left\{ \begin{array}{ll}       -\sigma (n_+-n_-) - (n_+q_+ - n_-q_-) =0,\\       -\sigma (q_+ -q_-) -(n_+-n_-)=0, \end{array} \right. \\&\mbox{and either $n_->n_+$ and $q_-<q_+$ or  $n_-<n_+$ and $q_-<q_+$ holds.}        \end{aligned}\end{align} 
For notational convenience, we denote $\tilu(x-\sigma t)$ by $\hatu=(\hatn,\hat{q}):=\tilu(x-\sigma t)$ whenever there is no confusion about the wave $\tilu$ with its fixed boundary condition.\\

In short, for any $\nu>0$,  for any $n_->0$, for any $n_+>0$ with $n_+\neq n_-$ and for any $q_-\in\mathbb{R}$, there exists a   smooth monotone  traveling wave   $\hatu(t,x)=\tilu(x-\sigma t)$ of  \eqref{nq_nu}  
satisfying \eqref{bdry_cond}
 where 
 the constants $\sigma$ and
 $q_+$ are determined by   
\be\label{sigma_eq} \sigma:= \begin{cases}
 &\frac{-q_-+\sqrt{q_-^2+4n_+}}{2}>0\quad \mbox{ if }\quad n_->n_+>0 ,\\
&\frac{-q_--\sqrt{q_-^2+4n_+}}{2}<0 \quad \mbox{ if }\quad  0<n_-<n_+ 
\end{cases}\ee 
 and 
  \be\label{end-con} q_+:= q_-+\frac{(n_--n_+)}{\sigma}\ee

Our motivation of  this work is to answer  the question  how stable traveling waves are in the system.  The paper \cite{LiWa_siam} showed that waves  are stable   if 
the anti-derivative of a perturbation $(n-\tiln, q-\tilq) $ is sufficiently small in  the Sobolev space $H^2(\bbr)$. Note that  the initial perturbation should  have   the mean-zero condition: 
\ben
\exists x_0\in\bbr \mbox{ such that }\int_\bbr \bmat{ n_0(x)-\tiln(x-x_0)}\\ { q_0(x)-\tilq(x-x_0)}\emat  dx = \bmat{0}\\ {0}\emat. 
\een This restriction for the initial data is commonly assumed in studying stability of viscous shocks since     the work of \cite{Go} and \cite{KaMa}. The main novelty of this paper is to remove both the mean-zero condition  and the smallness condition of the initial perturbation. \\

In this paper, we  frequently use the following facts (\textit{e.g.} see \cite[Lemma 2.1]{ckkv2019}):\\
$$\tiln>0,\quad \tiln,\,\tilq,\,\frac{1}{\tiln}\in L^\infty(\bbr),\quad \mbox{and}\quad
\tiln',\tiln'',\tilq'\in L^1(\bbr)\cap L^\infty(\bbr).$$

\begin{subsection}{Global existence around waves and their contraction}\ \\

To state the contraction property, we need the following notion:\\
For $U_i=\begin{pmatrix} n_i   \\ q_i  \end{pmatrix}
$  with $n_i>0$ for $i=1,2$, we consider the relative entropy 
$$\eta(U_1 | U_2):= \frac{|q_1-q_2|^2}{2}+{\Pi}(n_1 | n_2),$$
 where
\be\label{def_rel_n} {\Pi}(n_1 | n_2):= {\Pi}(n_1)-{\Pi}(n_2)-\nabla{\Pi}(n_2)(n_1-n_2),\qquad \Pi(n):=n\log n -n.\ee
Since $\Pi(n)$ is strictly convex in $n$, its relative functional ${\Pi}(\cdot | \cdot)$ above is positive definite, and so is $\eta(\cdot | \cdot)$. That is,
  $\eta(U_1 | U_2)\geq 0$ for any $U_1$ and $U_2$, and 
$\eta(U_1 | U_2)= 0$ if and only if $U_1=U_2$.\\

We present our main result for  the fixed viscosity $\nu=1$ case: \begin{align}\label{nq} \begin{aligned}
\pa_t n -\pa_x(n q)& =   \pa_{xx} n,\\
\pa_t q -\pa_xn & = 0 \quad \mbox{ for } x\in\mathbb{R} \quad\mbox{and for }t>0,
\end{aligned} \end{align} 
assuming the case of $n_->n_+>0$. 
    Then, in Remark \ref{rem_scaling} and 
    \ref{rem_sigma_negative}, we illustrate that the main result still holds for any $\nu>0$ and/or for $n_+>n_->0$.

 For a given wave $\tiln$ and for a given constant    $\lambda>0$,   we define the weight function $a(\cdot)$ by 
\be\la{def_a}
a:=1+\frac{\lambda}{\vep}(n_--\tiln) 
\ee where $\epsilon:=(n_--n_+)>0$.
Then we have 
$
a(-\infty)=1, a(+\infty)=1+\lambda,\mbox{ and }a'(x)=\Big(-\frac{\lambda}{\vep}\Big)\tiln'(x)>0 \mbox{ for } x\in\bbr.
$  Here is the main result:

\begin{theorem}\label{main_thm}
For a given constant state $(n_-,q_-)\in\bbr^+\times\bbr$, 
there exist     constants  $\kappa\in(0,\min\{n_-/(15),1/8\})$ and $C>0$ 
such that the following is true:\\

For any $(n_+,q_+)\in\bbr^+\times\bbr$ satisfying \eqref{end-con} with $0<\epsilon:=(n_--n_+)<\kappa$, consider the traveling wave $\tilu:=\bmat\tiln \\ \tilq\emat$  of \eqref{nq}  with the boundary condition \eqref{bdry_cond} and  with the speed $\s$ from \eqref{sigma_eq}. Take  any constant $\lambda $ between $\frac{\epsilon}{\sqrt{\kappa}}$ and $\sqrt{\kappa}$. 
Let  $U_0(x):=\bmat n_0(x) \\ q_0(x) \emat $ satisfy 
\ben 
U_0-\tilu \in H^1(\mathbb{R}), \quad
0<\frac{1}{n_0}\in L^\infty(\bbr)
\een 
{\bf (i) Global existence :} 
Then 
there exists the unique global-in-time solution $U(t,x):=\bmat n(t,x)\\ q(t,x) \emat$   to \eqref{nq}      for    $U|_{t=0}=U_0$ such that
\begin{align*}
\begin{aligned}
&(n-\hatn, q-\hat q) \in\left(C([0,T];H^1(\bbr))\cap L^2(0,T;H^2(\bbr))\right)\times C([0,T];H^1(\bbr)),\\
&0<\frac{1}{n}\in L^\infty(0,T;L^\infty(\bbr))
\end{aligned}
\end{align*} 
 for any $T>0$.\\
 {\bf (ii) Contraction :}  Moreover, there exists an absolutely continuous shift function $X:[0,\infty)\rightarrow \mathbb{R}$ with 
$X\in W^{1,1}_{loc}$ and  
$X(0)=0$ such that   
\begin{align}
\begin{aligned}\la{ineq_contraction_up_to_infinity}
& \int_{-\infty}^{\infty} a(x-\s t) \eta\big(U(t,x-X(t))| \tilu(x-\s t)\big) dx \\
&\qquad + \sqrt{\kappa}\int_{0}^{t} \int_{-\infty}^{\infty} a(x-\s \tau) n\big(\tau,x-X(\tau)\big)\Big| \pa_x \Big(\log\frac{n(\tau,x-X(\tau))}{\tiln(x-\s \tau)}\Big)\Big|^2 dxd\tau\\
&\quad  \le \int_{-\infty}^{\infty} a(x) \eta\big(U_0(x)| \tilu(x)\big) dx,      \\
&\mbox{where $a$ is the monotone function defined by \eqref{def_a}}
\end{aligned}
\end{align} 
and
\be\begin{split}\la{est_shift_up_to_infinity}
&|\dot X(t)-\sigma|\le \frac{1}{\vep^2}\Big(f(t) + C\int_{-\infty}^{\infty} \eta(U_0| \tilu) dx +1  \Big) \quad \mbox{ for \textit{a.e.} }t\in[0,\infty)\\
&\mbox{where $f$ is  some positive function  satisfying}\quad\|f\|_{L^1(0,\infty)} \le C\frac{\lambda}{\eps}\int_{-\infty}^{\infty} \eta(U_0| \tilu) dx.
\end{split}\ee    
 \end{theorem}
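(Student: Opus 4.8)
The plan is to combine a local‑in‑time existence result with \emph{a priori} estimates that simultaneously control the $H^1$ norm of the perturbation and prevent the density from touching $0$, and then to invoke a continuation criterion. First I would establish local existence and uniqueness of a solution with the stated regularity by a fixed‑point argument in $\{(n-\hatn,q-\hat q)\in C([0,T];H^1)\}$ coupled with a positive lower bound $1/n\in L^\infty$, the $L^2(0,T;H^2)$ gain for $n$ coming from the parabolic smoothing in the first equation of \eqref{nq}. The essential by‑product is a continuation criterion: the local solution extends as long as $\|(n-\hatn)(t)\|_{H^1}+\|(q-\hat q)(t)\|_{H^1}+\|1/n(t)\|_{L^\infty}$ remains finite. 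Hence it suffices to bound these three quantities on every finite interval $[0,T]$ by constants depending only on $T$, the data, and $\kappa,\lambda$ — not on the solution.

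\textbf{From the contraction to a weak a priori bound.} On any interval of existence I would apply the weighted relative‑entropy contraction of Choi--Kang--Kwon--Vasseur \cite{ckkv2019}, which is precisely part (ii): it produces the shift $X$ and the inequalities \eqref{ineq_contraction_up_to_infinity}--\eqref{est_shift_up_to_infinity}. Integrating the pointwise shift bound \eqref{est_shift_up_to_infinity} in time and using $\|f\|_{L^1}\le C\frac{\lambda}{\eps}\int\eta(U_0|\tilu)\,dx$ yields $|X(t)-\sigma t|\le C_T$ on $[0,T]$, i.e. the shift stays in a bounded window. Undoing the translation $x\mapsto x-X(t)$ in \eqref{ineq_contraction_up_to_infinity}, using that the weight $a$ lies between $1$ and $1+\lambda$ and that a bounded translate of the profile $\tilu$ moves it only boundedly in the relevant norms (since $\tiln',\tilq'\in L^1\cap L^\infty$, hence in $L^2$), I obtain — uniformly on $[0,T]$ and with constants controlled by $\mathcal{E}_0:=\int a\,\eta(U_0|\tilu)\,dx$ and $T$ — an $L^2$ bound on $q(t,\cdot)-\tilq(\cdot-\sigma t)$, an $L\log L$‑type bound on $n(t,\cdot)$ coming from $\Pi(n\,|\,\tiln)$, and a space‑time bound $\int_0^T\!\!\int n\,|\partial_x\log(n/\tiln)|^2\,dx\,d\tau\le \mathcal{E}_0/\sqrt{\kappa}$.

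\textbf{De Giorgi steps and closing the $H^1$ estimate.} This weak control is not yet an $H^1$ bound, so I would run two De Giorgi‑type truncation iterations. First, viewing the $n$‑equation $\partial_t n-\partial_{xx}n=\partial_x(nq)$ as a parabolic equation whose drift/source involving $q$ is, in one space dimension, below the scaling threshold once we know $q-\tilq\in L^\infty_tL^2_x$, a De Giorgi iteration started from the $L\log L$‑plus‑dissipation bound upgrades $n$ to $L^\infty_{loc}([0,T]\times\bbr)$. Second, applying the same lemma to a suitable function of $1/n$ (for instance $-\log n$) rules out vacuum and gives $1/n\in L^\infty_{loc}$. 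With $n$ bounded above and below, $\eta(U|\tilu)$ is comparable to $|U-\tilu|^2$ and $\int n\,|\partial_x\log(n/\tiln)|^2$ controls $\|\partial_x n\|_{L^2_tL^2_x}$; a quasilinear energy estimate — differentiate the $n$‑equation in $x$, pair with $\partial_x(n-\hatn)$, couple with $\partial_t(q-\hat q)=\partial_x(n-\hatn)$ for the second equation, and close by Gronwall's inequality absorbing the remainders into the parabolic dissipation — then produces the finite $H^1$ bound on $[0,T]$. Feeding this and the lower bound on $n$ into the continuation criterion gives $T_*=\infty$, and the global forms of \eqref{ineq_contraction_up_to_infinity}--\eqref{est_shift_up_to_infinity} are exactly part (ii).

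\textbf{Main obstacle.} I expect the crux to be the De Giorgi step. One starts only from the weighted relative entropy and its dissipation — not from a clean $L^\infty_tL^2_x$ energy — and the drift $q$ in the $n$‑equation is itself controlled only through the entropy and through $\partial_t q=\partial_x n$, which ties the regularity of $q$ back to that of $n$. Making the truncated energy inequalities close, uniformly down to the vacuum level and up to finite time $T$, while keeping every constant independent of the solution, is the delicate point; by contrast, once the $L^\infty$ upper and lower bounds on $n$ are in hand, the $H^1$ energy estimate and the passage to global existence are routine.
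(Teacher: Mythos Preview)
Your overall architecture---local existence with a continuation criterion, application of the contraction estimate from \cite{ckkv2019} to bound the relative entropy and the shift, removal of the shift via the bound $|X(t)|\le C(1+t)$, then De~Giorgi to get $n,1/n\in L^\infty$, and finally a standard $H^1$ energy estimate closed by Gr\"onwall---matches the paper's exactly. You are also right that the De~Giorgi step is the crux.

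The one substantive difference is \emph{how the De~Giorgi lemma is fed}. You propose to run it directly using only $q-\tilq\in L^\infty_tL^2_x$, arguing that in one space dimension this drift regularity is subcritical. The paper does \emph{not} do this: its De~Giorgi lemma (Lemma~\ref{lem_degiorgi}) explicitly requires the drift $p_1=-q$ to lie in $L^\infty_{t,x}$ with a quantitative bound. To obtain $q\in L^\infty$ \emph{before} De~Giorgi, the paper exploits the auxiliary variable $w:=n-\partial_x q$, which satisfies $\partial_t w+nw=n^2+q\,\partial_x n$; positivity of $n$ then gives the scalar inequality $\partial_t|w|\le n^2+|q\,\partial_x n|$. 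Using the $L^1+L^2$ decomposition of $n-\hat n$ (from the entropy bound) together with $\partial_x\sqrt{n}\in L^2_{t,x}$ (from the dissipation), the right-hand side is placed in $L^1_t(L^1_x+L^\infty_x)$, which after integration yields $\partial_x q\in L^\infty_t(L^1_x+L^\infty_x)$; combined with $q\in L^\infty_t(L^1_x+L^\infty_x)$ and a one-line 1D interpolation, this gives $\|q\|_{L^\infty_{t,x}}\le C(T_0)$. Only then is De~Giorgi applied to $m=n$ and $m=1/n$ with bounded coefficients.

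Your direct route is plausible---in 1D, Gagliardo--Nirenberg lets one absorb the term $\int (n-k)_+ (q-\tilq)\partial_x(n-k)_+$ using $\|(n-k)_+\|_{L^\infty}\le C\|(n-k)_+\|_{L^2}^{1/2}\|\partial_x(n-k)_+\|_{L^2}^{1/2}$, and the level-set iteration can still be made super-quadratic---but it requires a genuinely stronger De~Giorgi lemma than the paper proves, and you would have to handle separately the contribution from the $L^\infty$ part $\tilq$ of the drift. The paper's $w$-trick is the structural shortcut that sidesteps all of this and reduces the De~Giorgi step to a routine application with bounded coefficients.
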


 The proof is presented in Section \ref{sec_pf_main_thm}.

 
\begin{remark} \label{rem_sigma_negative} 
The result for $n_+>n_->0$ can be  obtained by the change of variables $x\mapsto -x$ with $\s\mapsto -\s$. 
Therefore, from now on, we always assume $n_->n_+>0$ and thus 
\[
0<\sigma=\frac{-q_- +\sqrt{q_-^2+4n_+}}{2}.
\]
\end{remark}

\begin{remark} \label{rem_scaling}
For general $\nu>0$ of \eqref{nq_nu}, we have the global existence and the contraction by the following scaling: \\
If $U^{\nu}$ and $\tilu^\nu$ are a solution and traveling wave to \eqref{nq_nu} for a fixed $\nu>0$ with initial data $U_0$, respectively, then  $U(t,x):=U^{\nu}(\nu t,\nu x)$ (resp. $\tilu(x):=\tilu^\nu(\nu x)$) is a solution (resp. traveling wave) to \eqref{nq}   (\textit{e.g.} also see \cite[Remark 1.5]{ckkv2019}).
 \end{remark}

\begin{remark}\label{rem_L_2}
For   $n_->0$, there exists a constant $C>0$ such that for any $n_1>0$ and for any  $n_2\in(n_-/2,n_-)$,
\be\label{Q_global_L2} \Pi(n_1|n_2) \leq C |n_1-n_2|^2\ee by \eqref{Q_loc_} and \eqref{upper_1/2} in Lemma \ref{lem_2.8} (or see \cite[Lemma 2.8]{ckkv2019}).
If we take $n_+\geq n_-/2$,
 it implies 
$n_-/2< \tiln< n_-$. Thus we have 
$$ \int_\bbr\eta(U(x) |\tilu(x))dx\leq C\|U-\tilu\|^2_{L^2(\bbr)}$$
   for any function $U$ with $U-\tilu\in L^2$. Therefore, the initial condition $U_0-\tilu\in H^1$ implies $ \int_{-\infty}^{\infty} \eta\big(U_0| \tilu\big)<\infty$.
   However, the reversed inequality is false because $\Pi\sim n_1\log n_1$ when $n_1$ is large (see \eqref{def_rel_n} and  \eqref{Q_global_} in Lemma \ref{lem_2.8}).
   \end{remark}

\begin{remark}
Since the weight function $a$ satisfies that $|a(x)-1|\leq\lambda<
\sqrt{\kappa}
<1/2$ for all $x\in\bbr$, the contraction estimate 
\eqref{ineq_contraction_up_to_infinity}
yields
\[
 \int_{-\infty}^{\infty} \eta\big(U(t,x-X(t))| \tilu(x-\s t)\big) dx   \le 4 \int_{-\infty}^{\infty} \eta\big(U_0(x)| \tilu(x)\big) dx.    
\]
\end{remark}

\vspace{1cm}

In the previous work \cite{ckkv2019},  it  was turned out that  both the smallness of the shock strength and the strict positivity of $n_-$ and $n_+$ in \eqref{bdry_cond} are technically important for our result even though the traveling waves exist even in the case of the large shock strength (or/and) $\min(n_-,n_+)=0$. In particular, as explained in \cite{LiWang12}, the case of $\min(n_-,n_+)=0$ is more relevant to  the original modeling.  The problem of the extension of our result 
seems   to be beyond reach of current known methods.
 With the mean-zero condition, the  stability for the case of $\min(n_-,n_+)=0$ case were shown 
 in a weighted Sobolev space
  in \cite{JinLiWa} and \cite{LiLiWa}. For planar waves on a cylinder, we refer to \cite{CCKL} and  \cite{CC2019}. \\

     For the Cauchy problem of \eqref{nq_nu}, we refer to
\cite{GXZZ, LPZ, MWZ}. For multi-dimentional cases, see \cite{LLZ} and references therein. 
 
 \end{subsection}

\subsection{Ideas of Proof}
In order to construct a global-in-time solution as a large $H^1$-perturbation of the traveling wave $\tilde U$, we may first find the usual relative entropy inequality for the system \eqref{nq}. 
For that, we need to observe the evolution of the relative entropy, based on the relative entropy method  \cite{Dafermos1,DiPerna}. More precisely, using the computations in the proof of \cite[Lemma 2.3]{ckkv2019} (or see \cite{Kang19,KV_arxiv,KV-unique19,Kang-V-1,Vasseur_Book}), we find that
\beq\label{st-eq}
\partial_t \eta(U|\tilde U) = -\partial_\xi \Big(G(U;\tilde U) + (\partial_\xi n) \log (n/\tilde n)\Big) -  \frac{|\partial_\xi n|^2}{n} + \frac{\partial_\xi n \tilde n'}{\tilde n} -\frac{n-\tilde n}{\tilde n} \tilde n''
+\frac{\tilde n'}{\tilde n} (n-\tilde n)(q-\tilde q) ,
\eeq
where $\xi:=x-\sigma t$, and $G(U;\tilde U)$ denotes the flux of the relative entropy.\\
If $\tilde n(\xi)$ were constant in $\xi$ like the case of $n_-=n_+$, then the above equality would become
\[
\partial_t \eta(U|\tilde U) = -\partial_\xi \Big(G(U;\tilde U) + (\partial_\xi n) \log (n/\tilde n)\Big)  
-  \frac{|\partial_\xi n|^2}{n},
\]
which gives the dissipation of the (total) relative entropy :
\be\label{simple_re}
\frac{d}{dt} \int_\bbr \eta(U|\hat U) dx +  \int_\bbr  \frac{|\partial_x n|^2}{n} dx \le 0 .
\ee
Note that the above inequality (in fact, contraction of the relative entropy) holds regardless of $q_- \neq q_+$ or $q_- = q_+$, i.e., discrepancy of the end states of $\hat q$.\\
However, we consider the traveling wave connecting two different states, that is, $\tilde n$ is not constant. Therefore, it is not obvious to get such a simple relative entropy functional inequality \eqref{simple_re} from \eqref{st-eq}.
In fact, it turns out in \cite{ckkv2019} that that is a far complicated issue. There, it was proven that the weighted relative entropy is dissipative (or contractive) up to a time-dependent shift $X(t)$ (see Proposition \ref{main_thm_ckkv}). 
Therefore, Proposition \ref{main_thm_ckkv} on the contraction property of the relative entropy will be importantly used in Proposition \ref{prop_unif} to extend the life span of a local-in-time solution for all time.\\
  

We sketch the proof.
 Recall that Proposition \ref{main_thm_ckkv} holds during $n>0$ \textit{i.e.} $1/n\in L^\infty$ (see the definition of the space \eqref{sp-T}). Thus, we first show a local existence theorem 
(Proposition  \ref{prop_lwp}) guaranteeing 
 that $n$ does not vanish up to a certain time interval $[0,T]$. Then we apply Proposition \ref{main_thm_ckkv} for the time interval in order to get the contraction of the weighted relative entropy functional \eqref{ineq_contraction} up to some shift $X(t)$ satisfying \eqref{est_shift}. In short, we have \begin{align}\label{in_short}
\begin{aligned}
& \frac{d}{dt} \int_{\bbr}  \hat{a}^{X} \eta\big(U| \hatu^{X}\big) dx+ \sqrt{\kappa}  \int_{\bbr} \hat{a}^{X} n \Big| \pa_x \Big(\log\frac{n }{\hatn^{X}}\Big)\Big|^2 dx  \leq 0,
\end{aligned}
\end{align} {where $\hat{a}(t,x)=a(x-\sigma t)$   with \eqref{def_a} and the superscript $X$ is defined by the translation in  $x-$variable by the given shift $X(t)$ as in \eqref{superX}.
 
After the process, it remains to solve  two main issues. First we  obtain finiteness 
(see \eqref{est_no_shift}) of the functional without a shift $X$ and without a weight $\hat a$: 
$$
\sup_{[0,T]}\int_{\bbr}   \eta\big(U| \hatu \big) dx   \leq C(T),
$$
thanks to boundedness of the  shift \eqref{est_shift}.
In this step, the estimate is little delicate  due to the Log structure of the relative entropy at infinity (see \eqref{def_rel_n} and \eqref{Q_global_}).   \\

Second, we obtain $q\in L^\infty$ by using the particular structure \eqref{w-eq} satisfied by $(n-\pa_x q)$. Here we take advantage of \eqref{w-ineq} from positivity of $n$. Since the dissipation term in \eqref{in_short} give the estimate of $\pa_x\sqrt{n}\in L^2$ (see \eqref{sqrt_n_diss}), we obtain $q\in L^\infty$ by decomposing each function into $L^1+L^\infty$.  Then the estimate $n, 1/n\in L^\infty$ follows from De Giorgi type Lemma \ref{lem_degiorgi}. By having $n,q\in L^\infty$,   the standard energy method gives all higher order estimates.\\

As a result, we get a priori bound in $H^1$-norm up to any arbitrarily large time, which  guarantee a $L^\infty$-bound of $1/n$ up to the life span of any solution due to De Giorgi type Lemma \ref{lem_degiorgi}. It implies no finite-time blow-up happens. In other words, there is a global-in-time solution.


 \subsection{A chemotaxis model describing tumour angiogenesis}\label{fromchemo}
 The system \eqref{nq_nu} can be derived from   the following  system of Keller-Segel type \cite{KSb}: 
 \begin{align}\label{KS} \begin{aligned}
\pa_t n - \nu\Del n& = - \na \cdot (n \chi(c) \na c ), \\
\pa_t c 
& = - c^m n  \quad \mbox{ for } \bold{x}\in\mathbb{R}^d \quad\mbox{and for }t>0. 
\end{aligned} \end{align}  

This system has been used to describe   chemotaxis  phenomena  including angiogenesis
that is
the formation of new blood vessels from pre-existing vessels. We may consider the formation   as  the mechanism for tumour progression and metastasis  (\textit{e.g.} see \cite{FoFrHu, FrTe,Le,Pe,  Rosen1, Sh}, and references therein).
In this interpretation,  we consider $n(\bold{x},t)>0$  the density of endothelial cells and $c(\bold{x},t)$  the concentration of  the protein known as the vascular endothelial  growth factor(VEGF) or just tumour angiogenesis factor(TAF). The given sensitivity function  $\chi(\cdot): \bbr^+ \to  \bbr^+ $ is usually assumed to be  decreasing to reflect that  the chemosensitivity becomes  lower as the  concentration of the chemical does higher.  The positive exponent
$m$ of the chemical concentration represent  the consumption rate of   $c$ (see the introduction in \cite{ckkv2019} for more details). \\

 For the Cauchy problem of \eqref{KS}, we see \cite{CPZ1, FrTe} and references therein.  We refer to the study on   traveling wave solutions of a Keller-Segel model in \cite{KSb} and many other works including \cite{Hor} (also see  the survey paper \cite{Wa_survey}).\\

To derive our system \eqref{nq_nu}, we just take    $\chi(c)= c^{-1}$ and $m =1$ and  $d=1$, 
  into \eqref{KS} to get
\begin{align*} 
 \begin{aligned}
\pa_t n - \nu \pa_{xx} n& = - \pa_x \left(n \frac{ \pa_x c}{c} \right), \\
\pa_t c  & = - cn. 
\end{aligned} \end{align*}
   Thanks to the 
    restriction   $m =1$, we can treat the singularity   in $c$ of the sensitivity by the Cole-Hopf transformation
\begin{equation*}
 q := - \pa_x [\ln c ]= -\frac{\pa_x c}{c}.
\end{equation*}   After the transform, we have 
\eqref{nq_nu} as in \cite{WaHi}.  \textit{cf)} For the case $m\neq1$, we refer to the recent work \cite{caliwa} and references therein.

\indent
 
    \indent

  \ \\

\section{Preliminaries}\label{sec_pre}
In this section, we present some lemmas that will be used throughout the paper.

\subsection{Useful inequalities}

We here present some useful inequalities on $\Pi(\cdot|\cdot)$, which were proved in \cite[Lemma 2.8]{ckkv2019}.

\begin{lemma} {\bf (\cite[Lemma 2.8]{ckkv2019})} \la{lem_2.8}
For given constants $\delta\in(0,\frac{1}{2}]$ and $n_->0$, there exist positive constants $C_1=C_1(n_-), C_2=C_2(n_-,\delta)$ and $C_3=C_3(n_-,\delta)$  such 
that the following inequalities hold:\\
1) For any $n_1>0$ and any $n_2>0$ with $\frac{n_-}{2}<n_2<n_-$,  
\be\la{Q_loc_} \frac{1}{{C_{1}}}|n_1-n_2|^2\leq  {\Pi}(n_1| n_2)\leq  {{C_{1}}}|n_1-n_2|^2 \quad \mbox{whenever }\, |\frac{n_1}{n_2}-1|\leq\delta,\ee 
\be\la{Q_global_} \frac{1}{{C_{2}}}(1+n_1\log^+ \frac{n_1}{n_2})\leq {\Pi}(n_1| n_2)\leq {{C_{2}}}(1+n_1\log^+ \frac{n_1}{n_2}) \quad \mbox{whenever }\, |\frac{n_1}{n_2}-1|\geq\delta,\ee 
\be\la{upper_1/2}\begin{split}
 & \frac{1}{{C_3}} |n_1-n_2|\leq {\Pi}(n_1|n_2) \le C_3|n_1-n_2|^2 \quad \mbox{whenever} \quad |\frac{n_1}{n_2}-1|\geq\delta,
 \end{split}\ee
 where $\log^+(y)$ is the positive part of $\log(y)$.\\
2) For any $n_1,n_2,m>0$ satisfying $m\le n_2 \le n_1$ or $n_1\le n_2 \le m$,
\beq\label{Pi-mono}
\Pi(n_1|m) \ge \Pi(n_2|m). 
\eeq
\end{lemma}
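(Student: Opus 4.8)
The plan is to reduce all four estimates to elementary properties of the single-variable function $\phi(r):=r\log r-r+1$, $r>0$. From $\Pi(n)=n\log n-n$, $\nabla\Pi(n)=\log n$ and \eqref{def_rel_n} one has the identity $\Pi(n_1\,|\,n_2)=n_2\,\phi(n_1/n_2)$, so everything hinges on $\phi$. First I would record the facts I need: $\phi(1)=\phi'(1)=0$; $\phi''(r)=1/r>0$, hence $\phi$ is strictly convex, strictly positive away from $r=1$, decreasing on $(0,1]$ and increasing on $[1,\infty)$; $\phi(r)\to1$ as $r\to0^+$; and $\phi(r)/(r\log r)\to1$ as $r\to\infty$. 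Writing $r:=n_1/n_2$ and using the exact relation $|n_1-n_2|=n_2|r-1|$ together with $n_-/2<n_2<n_-$, each claimed two-sided inequality becomes a two-sided bound on $\phi$ over an explicit range of $r$, the powers of $n_2$ being absorbed into the constants.

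For part~1, split according to whether $|r-1|\le\delta$ or $|r-1|\ge\delta$. If $|r-1|\le\delta\le1/2$, then $r\in[1/2,3/2]$, and Taylor's formula with $\phi(1)=\phi'(1)=0$ gives $\phi(r)=\frac{1}{2}\phi''(\xi)(r-1)^2$ for some $\xi$ between $1$ and $r$, whence $\frac{1}{3}(r-1)^2\le\phi(r)\le(r-1)^2$; multiplying by $n_2$ yields \eqref{Q_loc_} with $C_1$ depending only on $n_-$ (no $\delta$ enters, since $|r-1|\le\delta\le1/2$ already confines $r$ to $[1/2,3/2]$). If $|r-1|\ge\delta$, then $r$ lies in $K_\delta:=(0,1-\delta]\cup[1+\delta,\infty)$, and I would work on the two pieces separately. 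On $(0,1-\delta]$, $\phi$ is monotone and pinched between the positive constants $\phi(1-\delta)$ and $1$, while $|r-1|$ and $(r-1)^2$ are comparable to $1$; on $[1+\delta,\infty)$, $\log^+ r=\log r$, the ratio $\phi(r)/(r\log r)$ is continuous, positive, $\le 1$ and has limit $1$ at $\infty$, hence is pinched between a positive constant $\theta(\delta)$ and $1$, and moreover $\phi(r)\ge\phi(1+\delta)>0$. Combining the two pieces (and comparing $\Pi$ with $1+n_1\log^+(n_1/n_2)$) gives \eqref{Q_global_}. For \eqref{upper_1/2}: on $K_\delta$ the ratio $\phi(r)/(r-1)^2$ is continuous with finite limits ($1$ at $0$, $0$ at $\infty$), hence bounded above, which gives the quadratic upper bound; and $\phi(r)/|r-1|$ is continuous, positive, has limit $1$ at $0$ and tends to $\infty$ at $\infty$, hence is bounded below by a positive constant, which gives the linear lower bound. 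The constants here depend on $n_-$ and $\delta$.

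For part~2, fix $m>0$ and set $F(n):=\Pi(n\,|\,m)=n\log(n/m)-(n-m)$. A direct differentiation gives $F'(n)=\log(n/m)$, so $F$ is decreasing on $(0,m]$ and increasing on $[m,\infty)$. Hence if $m\le n_2\le n_1$ then $F(n_1)\ge F(n_2)$ by monotonicity on $[m,\infty)$, while if $n_1\le n_2\le m$ then again $F(n_1)\ge F(n_2)$ by the decreasing monotonicity on $(0,m]$; either way $\Pi(n_1\,|\,m)\ge\Pi(n_2\,|\,m)$.

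The whole argument is elementary and I do not expect a serious obstacle; the one point that needs care is the control of the ratio functions $\phi(r)/(r-1)^2$, $\phi(r)/|r-1|$ and $\phi(r)/(r\log r)$ on the \emph{non-compact} set $K_\delta$, which is precisely why I would record the limits of $\phi$ at $0$ and at $\infty$ at the very start: they show each ratio either extends continuously to a compact set with the correct sign, or tends to $0$, or tends to $\infty$, so the required upper or lower bounds hold with constants depending only on $n_-$ and $\delta$ (and on $n_-$ alone in \eqref{Q_loc_}). Alternatively, the statement is quoted verbatim from \cite[Lemma 2.8]{ckkv2019}, and the above is simply a self-contained rederivation.
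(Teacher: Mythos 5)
Your proposal is correct. The key reduction $\Pi(n_1|n_2)=n_2\,\phi(n_1/n_2)$ with $\phi(r)=r\log r-r+1$ is the natural normalization, and the four facts you record about $\phi$ (critical point at $1$, convexity, the limits $\phi(0^+)=1$ and $\phi(r)/(r\log r)\to 1$ at $\infty$, and the one-sided monotonicity) are exactly what is needed. In \eqref{Q_loc_} your Taylor bound $\tfrac13(r-1)^2\le\phi(r)\le(r-1)^2$ on $[1/2,3/2]$ together with $n_-/2<n_2<n_-$ correctly gives a $\delta$-independent constant. In \eqref{Q_global_} you correctly treat the two branches of $K_\delta$ separately, using that $\log^+ r=0$ for $r\le 1$ so the right-hand side degenerates to $1$ there, and that $\phi(r)/(r\log r)$ is pinched between a positive constant and $1$ on $[1+\delta,\infty)$; the extra additive $1$ on both sides is absorbed into $C_2$ because $\phi(r)\ge\min(\phi(1-\delta),\phi(1+\delta))>0$ on $K_\delta$. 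In \eqref{upper_1/2} the continuity-plus-limit argument for $\phi(r)/(r-1)^2$ and $\phi(r)/|r-1|$ on the two non-compact pieces of $K_\delta$ is handled carefully, which is indeed the only delicate point. Part~2 follows instantly from $\partial_n\Pi(n|m)=\log(n/m)$ and the two-sided monotonicity you observe. Note that the present paper does not prove this lemma but simply cites \cite[Lemma 2.8]{ckkv2019}, so there is no in-paper proof to compare against; your argument is a valid self-contained rederivation.
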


\subsection{De Giorgi type lemma}\label{subsec_lower}\ \\  
We here present the following technical lemma, which may not be optimal but is enough for our purpose. This   lemma might be classical, but we present its proof in Appendix \ref{app:De} for completeness. 
The proof is based on the De Giorgi method \cite{DeG}.
  \begin{lemma}\label{lem_degiorgi}
Let $T_0>0$ and $R>0$. Then there exists a constant $M=M(T_0,R)>0$ with the following property:\\

 Let $T\in(0,T_0]$ and let $p_1, p_2, p_3$ be functions such that
 \beq\label{p123}
 p_1, p_2, p_3\in L^\infty((0,{T} )\times \bbr),\quad p_2, \partial_x p_2, \partial_x p_3\in L^2(0,T;L^2(\mathbb{R})).
 \eeq
Let $m\in L^\infty((0,{T} )\times \bbr)\cap C([0,T]\times\bbr)$ be a non-negative 
 function such that
  \be\label{eq_de}\begin{cases}
  & \partial_x m,  \partial_{xx} m,  \partial_{t} m\in L^2(0,T;L^2(\mathbb{R})), 
  \\
&  \partial_t m - \partial_{xx}m+ p_1 \partial_x m + m \partial_x(p_2+p_3)\leq 0, \\
& m=m_1+m_2\mbox{ with }  m_1\in L^\infty(0,T;L^2(\mathbb{R}))\mbox{ and }  m_2\in L^\infty(0,T;L^\infty(\mathbb{R})).
  \end{cases}\ee
  Assume 
\begin{align}\label{assump_lem_de}
\begin{aligned}
&\|m|_{t=0}\|_{L^\infty (\bbr)}+\||p_1|+|p_2|+|p_3|+|m_2|\|_{L^\infty((0,{T} )\times \bbr)}
+\||p_2|+|\partial_x p_3|\|_{L^2((0,{T} )\times \bbr)}
\leq R.
\end{aligned}
\end{align}   
  Then 
  $$  \|m\|_{L^\infty((0,T)\times \bbr)}\leq M .$$
  \end{lemma}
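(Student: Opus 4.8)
The plan is to run the De Giorgi iteration on truncated energy levels of $m$. First I would fix truncation levels $C_k := M_*(2 - 2^{-k})$ for a constant $M_* \ge R$ to be chosen at the end, so that $C_0 = M_*$ and $C_k \uparrow 2M_*$, and set $m_k := (m - C_k)_+$. Using the equation in \eqref{eq_de}, I would test against $m_k$ (justified since $m_k$ has the requisite Sobolev regularity by \eqref{eq_de}, and $m_k = 0$ wherever $m \le C_k$), integrate in $x$ over $\bbr$ and in $t$ over $(0,t)$, to obtain
\[
\frac12 \int_\bbr m_k^2(t)\, dx + \int_0^t \int_\bbr |\partial_x m_k|^2\, dx\, ds
\le \frac12 \int_\bbr m_k^2(0)\, dx
- \int_0^t \int_\bbr \big( p_1 \partial_x m\, m_k + m\, \partial_x(p_2 + p_3)\, m_k \big) dx\, ds.
\]
If $M_* \ge \|m|_{t=0}\|_{L^\infty} \ (\le R)$ then $m_k(0) \equiv 0$, killing the first term on the right. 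For the $p_1$ term I would use $\partial_x m = \partial_x m_k$ on $\{m > C_k\}$ and absorb via Young's inequality: $|p_1 \partial_x m_k\, m_k| \le \frac14 |\partial_x m_k|^2 + |p_1|^2 m_k^2 \le \frac14 |\partial_x m_k|^2 + R^2 m_k^2$. The term $m\, \partial_x p_2\, m_k$ needs the split $m = m_1 + m_2$: write $m = (m - C_k) + C_k = m_k + C_k$ on the support of $m_k$ (where $m > C_k \ge M_* \ge 0$), so $m\, \partial_x(p_2+p_3)\, m_k = m_k^2 \partial_x(p_2+p_3) + C_k m_k \partial_x(p_2+p_3)$; integrate the first by parts in $x$ to move the derivative off $p_2+p_3$, bounding by $\int (|p_2| + |p_3|)(2 m_k |\partial_x m_k|) \le \frac14 \int |\partial_x m_k|^2 + C\int (|p_2|^2 + |p_3|^2) m_k^2$, and handle the $C_k m_k \partial_x p_3$ piece directly with Cauchy-Schwarz using $\partial_x p_3 \in L^2$ and $\partial_x p_2$ via integration by parts again — here the hypotheses $|p_2|, |p_3| \le R$ in $L^\infty$ and $\|p_2\|_{L^2}, \|\partial_x p_3\|_{L^2} \le R$ are exactly what is needed. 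The upshot is an energy inequality of the form
\[
\sup_{[0,T]} \int_\bbr m_k^2\, dx + \int_0^T \int_\bbr |\partial_x m_k|^2\, dx\, dt
\le C(R, M_*) \Big( \int_0^T \int_\bbr m_k^2\, dx\, dt + C_k \big\| \mathbf{1}_{\{m_k > 0\}} \big\|_{\cdots} \cdot (\text{something}) \Big),
\]
which I will make precise so the right side is controlled by the level-set energy $U_{k-1} := \int_0^T \int_\bbr m_{k-1}^2\, dx\, dt$ at the previous level.

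The core of the argument is then the nonlinear recursion. Using the one-dimensional Gagliardo-Nirenberg / parabolic Sobolev embedding in the form $\|m_k\|_{L^{p}((0,T)\times\bbr)} \lesssim \sup_{[0,T]}\|m_k\|_{L^2} + \|\partial_x m_k\|_{L^2((0,T)\times\bbr)}$ for a suitable $p > 2$ (in $1+1$ dimensions one gets, say, $p = 6$ from interpolating $L^\infty_t L^2_x$ with $L^2_t \dot H^1_x \hookrightarrow L^2_t L^\infty_x$), and the elementary level-set inequalities
\[
m_{k}^2 \le m_{k-1}^2, \qquad
\mathbf{1}_{\{m_k > 0\}} = \mathbf{1}_{\{m_{k-1} > C_k - C_{k-1}\}} \le \Big(\frac{m_{k-1}}{C_k - C_{k-1}}\Big)^{\theta}
= \Big(\frac{2^{k+1} m_{k-1}}{M_*}\Big)^{\theta}
\]
for any $\theta > 0$, together with Hölder, I would derive
\[
U_k \le \frac{C^{b^k}}{M_*^{\,a}}\, U_{k-1}^{1+\beta}
\]
for explicit constants $a, \beta > 0$ and $b, C > 1$ depending only on $R$ and $T_0$ (the $T_0$-dependence enters through the Sobolev constant over $(0,T)$ with $T \le T_0$, which one can always bound by the constant over $(0,T_0)$ after extending $m_k$ by $0$... more simply, all estimates are on $(0,T)$ and monotone in $T$). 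The standard De Giorgi fast-geometric-convergence lemma then says: there is $\varepsilon_0 = \varepsilon_0(a,\beta,b,C) > 0$ such that if $U_0 \le \varepsilon_0 M_*^{a/\beta}$ then $U_k \to 0$, whence $m_{2M_*} := (m - 2M_*)_+ \equiv 0$, i.e. $\|m\|_{L^\infty((0,T)\times\bbr)} \le 2M_*$.

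It remains to \emph{choose} $M_*$ so that the smallness condition $U_0 = \int_0^T\int_\bbr (m - M_*)_+^2 \le \varepsilon_0 M_*^{a/\beta}$ holds. Here I would use the decomposition $m = m_1 + m_2$ from \eqref{eq_de} once more, along with \eqref{assump_lem_de}: on the set $\{m > M_*\}$ with $M_* \ge 2R \ge 2\|m_2\|_{L^\infty}$ we have $m - M_* \le m_1 + \|m_2\|_{L^\infty} - M_* \le m_1 - M_*/2 \le m_1$ and moreover $|m_1| \ge M_*/2$ there, so
\[
U_0 \le \int_0^T \int_{\{|m_1| \ge M_*/2\}} |m_1|^2\, dx\, dt
\le \frac{2^{s-2}}{M_*^{\,s-2}} \int_0^T \|m_1(t)\|_{L^2(\bbr)}^{s}\, dt
\le \frac{T_0\, (2R)^{s}}{4\, M_*^{\,s-2}}
\]
if we have an $L^\infty_t L^2_x$ bound $\|m_1\|_{L^\infty(0,T;L^2)} \le R$ (which is part of \eqref{assump_lem_de}) — wait, I only have $\|m_1\|_{L^\infty_t L^2_x} \le R$ not higher powers, so take $s = 2$: then $U_0 \le T_0 R^2$ once $M_* \ge 2R$ (the integrand is bounded by $|m_1|^2$ and $\int |m_1|^2 \le R^2$). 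This is not yet small; the smallness must instead be bought by raising $M_*$ in the \emph{other} direction: we need $T_0 R^2 \le \varepsilon_0 M_*^{a/\beta}$, which holds as soon as $M_* \ge (T_0 R^2/\varepsilon_0)^{\beta/a}$. So the final choice is
\[
M := 2 M_*, \qquad M_* := \max\Big\{ R,\ 2R,\ \big(T_0 R^2 / \varepsilon_0\big)^{\beta/a} \Big\},
\]
depending only on $T_0$ and $R$, and then $\|m\|_{L^\infty((0,T)\times\bbr)} \le M$ for every $T \le T_0$, as claimed.

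I expect the main obstacle to be bookkeeping the two-term decomposition $m = m_1 + m_2$ cleanly through the energy estimate: the $m_1$ part only has $L^\infty_t L^2_x$ regularity (no $L^\infty$ bound), so one cannot treat $m\,\partial_x(p_2+p_3)\,m_k$ by brute force; the trick is precisely to replace $m$ by $m_k + C_k$ on $\{m_k > 0\}$ and integrate by parts, as sketched above, which converts "one power of $m$ too many" into the level-set smallness factor that drives the iteration. A secondary nuisance is justifying the test-function computation — multiplying \eqref{eq_de} by $m_k$ and integrating over all of $\bbr$ — which is legitimate because $m_k$, $\partial_x m_k$, $\partial_t m_k$ all lie in $L^2(0,T;L^2(\bbr))$ by \eqref{eq_de} (with $\partial_t(m_k^2/2) = m_k \partial_t m$ in the distributional sense) and because $p_1, p_2, p_3 \in L^\infty$ plus the stated $L^2$ bounds make every term on the right finite; no spatial boundary terms appear since everything decays in $L^2$.
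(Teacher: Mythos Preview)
Your De Giorgi iteration matches the paper's almost exactly: same truncation levels, same replacement $m = m_k + C_k$ on $\{m_k>0\}$ followed by integration by parts, same parabolic Sobolev embedding, same nonlinear recursion. The genuine gap is in how you bound the zeroth level $U_0$. You assert that ``$\|m_1\|_{L^\infty(0,T;L^2)} \le R$ (which is part of \eqref{assump_lem_de})'', but it is not: the quantitative bound $R$ in \eqref{assump_lem_de} controls only $\|m_2\|_{L^\infty}$; the membership $m_1\in L^\infty(0,T;L^2)$ is purely qualitative (see the Remark immediately after the lemma). Hence your estimate $U_0\le T_0 R^2$ is unfounded, and any choice of $M_*$ forcing $U_0\le \varepsilon_0 M_*^{a/\beta}$ would in fact depend on $\|m_1\|_{L^\infty_tL^2_x}$, contradicting the requirement $M=M(T_0,R)$. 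Running your own energy estimate at level $C_0=M_*$ does not save this either: the forcing term $C_0\,m_0\,\partial_x(p_2+p_3)$ produces, after Young's inequality, a contribution of order $M_*^2 R^2$, and since the recursion has $a/\beta=2$ this exactly fails to be small relative to $M_*^{a/\beta}$.

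The paper closes the gap with a preliminary step at the fixed level $R$ (not $M_*$): testing the differential inequality against $\bar m:=(m-R)_+$ and using $m=\bar m+R$ on $\{\bar m>0\}$ gives
\[
\frac{d}{dt}\int_\bbr \bar m^2\,dx + \frac12\int_\bbr|\partial_x\bar m|^2\,dx
\le C(R^2+1)\|\bar m\|_{L^2}^2 + CR^2\big(\|p_2(t)\|_{L^2}^2+\|\partial_x p_3(t)\|_{L^2}^2\big).
\]
Since $\bar m|_{t=0}=0$ and the forcing integrates to at most $CR^4$ over $[0,T]$, Gr\"onwall yields
\(
\sup_{[0,T]}\int\bar m^2 + \int_0^T\!\!\int|\partial_x\bar m|^2 \le C_*(R,T_0),
\)
which depends on $(R,T_0)$ alone. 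This $C_*$ bounds the zeroth energy $E_0$, and the iteration is then started from levels $c_k=M(1-2^{-k-1})\ge M/2>R$; the smallness needed is $E_0/M^2\le C_*/M^2$, which is achieved by taking $M$ large. The decomposition $m=m_1+m_2$ is used in the paper only to check that the energies $E_k$ are finite a priori, never quantitatively.
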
 
  \begin{remark}
  We do not ask any quantitative bound but only finiteness 
  for the norms of
  $$
 \partial_x m,  \partial_{xx} m,  \partial_{t} m, \pa_x p_2\in {L^2(0,T;L^2(\bbr))}, m_1\in L^\infty(0,T; L^2(\bbr))
  $$ to ensure that all computations  in the proof make sense. 
  \end{remark}

\subsection{A priori contraction estimate}\label{sec_ckkv}

As in \cite{ckkv2019}, we define  the   space 
\begin{align}
\begin{aligned}\label{sp-T}
\mathcal{X}_T := \{ \bmat n\\ q \emat\in L^\infty ((0,T)\times\bbr)^2~|~ n>0, ~  n^{-1}\in L^\infty((0,T)\times \bbr),~\partial_x n \in L^2((0,T)\times \bbr) \} 
\end{aligned}
\end{align} for each $T>0$.\\
  
The following proposition on the contraction property is the main result of \cite{ckkv2019}.
 
\begin{proposition}\label{main_thm_ckkv} \cite[Theorem 1.2]{ckkv2019}
For a given constant state $(n_-,q_-)\in\bbr^+\times\bbr$, 
there exist  constants  $\delta_0\in(0,1/2)$ and $\hat{C}>0$ such that the following is true:\\
For any $\eps,\lambda>0$ with $\eps\in(0,n_-)$ and $\delta_0^{-1}\eps<\lambda<\delta_0$, and for any $(n_+,q_+)\in\bbr^+\times\bbr$ satisfying \eqref{end-con} with $|n_--n_+|=\eps$, there exists  a smooth monotone function $a:\bbr\to\bbr^+$ with $\lim_{x\to\pm\infty} a(x)=1+a_{\pm}$ for some  constants $a_-, a_+$ with $|a_+-a_-|=\lambda$ such that the following holds:\\
Let $\tilu:=\bmat\tiln \\ \tilq\emat$ be a  traveling wave of \eqref{nq} with the boundary condition \eqref{bdry_cond} and  with the speed $\s$ from \eqref{sigma_eq}.
For a given $T>0$, let $U(t,x):=\bmat n(t,x)\\ q(t,x) \emat$ be a solution to \eqref{nq}  belonging to $\mathcal{X}_T$ with initial data $U_0(x):=\bmat n_0(x) \\ q_0(x) \emat $ satisfying \be
\label{initial_entropy} \int_{-\infty}^{\infty} \eta(U_0| \tilu) dx<\infty.\ee Then there exists an absolutely continuous shift function $X:[0,T]\rightarrow \mathbb{R}$ with 
$X\in W^{1,1}_{loc}$ and 
$X(0)=0$ such that   
\begin{align}
\begin{aligned}\la{ineq_contraction}
& \int_{-\infty}^{\infty} a(x-\s t) \eta\big(U(t,x-X(t))| \tilu(x-\s t)\big) dx \\
&\qquad + \delta_0\int_{0}^{t} \int_{-\infty}^{\infty} a(x-\s \tau) n\big(\tau,x-X(\tau)\big)\Big| \pa_x \Big(\log\frac{n(\tau,x-X(\tau))}{\tiln(x-\s \tau)}\Big)\Big|^2 dxd\tau\\
&\quad  \le \int_{-\infty}^{\infty} a(x) \eta\big(U_0(x)| \tilu(x)\big) dx,      
\end{aligned}
\end{align} 
and
\be\begin{split}\la{est_shift}
&|\dot X(t)-\sigma|\le \frac{1}{\vep^2}\Big(f(t) + \hat{C}\int_{-\infty}^{\infty} \eta(U_0| \tilu) dx +1  \Big) \quad \mbox{ for \textit{a.e.} }t\in[0,T]\\
&\mbox{where $f$ is  some positive function  satisfying}\quad\|f\|_{L^1(0,T)} \le \hat{C}\frac{\lambda}{\eps}\int_{-\infty}^{\infty} \eta(U_0| \tilu) dx.
\end{split}\ee 
 \end{proposition}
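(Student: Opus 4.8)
The plan is to prove Proposition~\ref{main_thm_ckkv} by the weighted relative entropy method with a dynamically chosen shift --- the ``$a$-contraction with shift'' technique. For the given solution $U\in\mathcal{X}_T$ I would work in the traveling-wave frame $\xi=x-\s t$ and, for an as-yet-undetermined shift $X$ with $X(0)=0$, track the functional
\[
\mathcal{E}(t):=\int_{\bbr}a(\xi)\,\eta\big(V(t,\xi)\,|\,\tilu(\xi)\big)\,d\xi,\qquad V(t,\xi):=U\big(t,\,\xi+\s t-X(t)\big),
\]
which is precisely the first term of \eqref{ineq_contraction}. Differentiating $\mathcal{E}$ in $t$ and using: the relative entropy identity \eqref{st-eq} (valid for $U\in\mathcal{X}_T$ after a standard regularization), the profile equations for $\tilu$ with the speed $\s$ of \eqref{sigma_eq}--\eqref{end-con}, the fact that the relative entropy flux in the $\xi$-frame is $G_0(V;\tilu)-\s\,\eta(V\,|\,\tilu)$, and integration by parts against $a$, one obtains an identity of the schematic shape
\[
\ddt\mathcal{E}(t)=(\dot X(t)-\s)\,\mathcal{Y}[V]-\mathcal{D}[V]+\mathcal{B}[V].
\]
Here $\mathcal{D}[V]\ge 0$ is the parabolic dissipation: it contains the term $\delta_0\int a\,n\,|\pa_\xi\log(n/\tiln)|^{2}$ that must survive in \eqref{ineq_contraction}, together with a ``good'' term $-\int a'(\cdots)\le 0$ coming from the strict monotonicity $a'=-(\lam/\vep)\tiln'>0$ and the Lax condition \eqref{bdry_cond}. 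The functional $\mathcal{Y}[V]$ is the coefficient, linear in $V-\tilu$ and localized by $\tiln'$, which has no definite sign; and $\mathcal{B}[V]$ collects the remaining terms, each of which carries a factor $\tiln'$ or $\tiln''$ and hence a coefficient of size $O(\vep)$ (since $\|\tiln'\|_\infty+\|\tiln''\|_\infty\aleq\vep$ and $\int_{\bbr}|\tiln'|=\vep$ for small $\vep$), apart from the ``boundary'' pieces $\int a'(\cdots)$, which carry the amplified size $O(\lam)$.

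\textbf{Construction of the shift.} I take $X$ to be an absolutely continuous solution of a scalar ODE $\dot X(t)=\s-\vep^{-2}\mathcal{Y}[V(t,\cdot)]$, so that $(\dot X-\s)\mathcal{Y}[V]=-\vep^{-2}|\mathcal{Y}[V]|^{2}\le 0$ and this term leaves the estimate entirely. The only real issue is well-posedness of this ODE, whose right-hand side depends on $V(t,\cdot)=U(t,\cdot+\s t-X(t))$, hence on $X(t)$ itself; I would resolve it by mollifying $U$ in $x$ (making the right-hand side Lipschitz in $X$), solving the truncated ODE, obtaining bounds on $X$ uniform in the mollification from the a priori shift estimate below, and passing to the limit. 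The limiting $X$ belongs to $W^{1,1}_{loc}$ exactly because the right-hand side of \eqref{est_shift} is locally integrable in time.

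\textbf{The contraction estimate.} After the shift is fixed, \eqref{ineq_contraction} reduces to the pointwise-in-$t$ inequality $\mathcal{B}[V]\le\mathcal{D}[V]-\delta_0\int a\,n\,|\pa_\xi\log(n/\tiln)|^{2}$. The $O(\vep)$-weighted quadratic terms in $\mathcal{B}$ --- for instance $\int a\,\frac{\tiln'}{\tiln}(n-\tiln)(q-\tilq)$, $\int a\,\frac{n-\tiln}{\tiln}\tiln''$ and $\int a\,\frac{\pa_\xi n\,\tiln'}{\tiln}$ arising in \eqref{st-eq} --- are absorbed into the parabolic dissipation and into the good term $-\int a'(\cdots)$ by Cauchy--Schwarz: near the wave the good term controls a weighted $L^{2}$-norm of $V-\tilu$ with amplitude $\sim\lam$, while the bad terms have amplitude only $\sim\vep$, so the hypothesis $\vep/\delta_0<\lam<\delta_0$ together with smallness of $\vep$ and $\delta_0$ makes the good term dominate and still leaves room for the surviving dissipation $\delta_0\int a\,n\,|\pa_\xi\log(n/\tiln)|^{2}$. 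The genuinely subtle region is where $n$ is large: there $\eta(V\,|\,\tilu)=\frac{|q-\tilq|^{2}}{2}+\Pi(n\,|\,\tiln)$ grows only like $n\log n$, not like $|n-\tiln|^{2}$ (see \eqref{Q_global_}), so one must invoke Lemma~\ref{lem_2.8} to trade $\Pi(n\,|\,\tiln)$ against $|n-\tiln|$ or $|n-\tiln|^{2}$ as needed, and combine this with the parabolic smoothing to close a weighted, nonlinear Poincar\'e-type estimate localized near the wave. Pinning down the exact form of that inequality and performing the term-by-term balancing is, I expect, the principal obstacle; once the pointwise inequality is established, integrating in $t$ yields \eqref{ineq_contraction}.

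\textbf{The shift bound.} From the ODE, $|\dot X(t)-\s|=\vep^{-2}|\mathcal{Y}[V(t)]|$, so \eqref{est_shift} follows once I bound $|\mathcal{Y}[V(t)]|$ by $f(t)+\hat C\int_{\bbr}\eta(U_0\,|\,\tilu)\,dx+1$ with $\|f\|_{L^{1}(0,T)}\le\hat C\frac{\lam}{\vep}\int_{\bbr}\eta(U_0\,|\,\tilu)\,dx$. The summands of $\mathcal{Y}$ built from $a'\,\eta(V\,|\,\tilu)$ and from $a\,\tiln'(V-\tilu)$ are controlled, using $|a|\le 2$, $\|a'\|_\infty\aleq\lam$, Young's inequality, Remark~\ref{rem_L_2}, and the contraction $\mathcal{E}(t)\le\mathcal{E}(0)$ just proved, by $\hat C\int_{\bbr}\eta(U_0\,|\,\tilu)\,dx+1$ (the ``$+1$'' absorbing an $O(\vep^{2})$ remainder). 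The summands linear in $\pa_\xi n$, coming from the relative flux, are bounded by $C\frac{\lam}{\vep}$ times the instantaneous dissipation rate appearing in \eqref{ineq_contraction}; taking $f(t)$ to be that contribution and integrating \eqref{ineq_contraction} over $[0,T]$ gives $\int_{0}^{T}(\text{dissipation rate})\,dt\le\hat C\int_{\bbr}\eta(U_0\,|\,\tilu)\,dx$, whence the asserted $L^{1}$ bound on $f$. This completes the proof.
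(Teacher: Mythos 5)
This proposition is not proved in the paper under review: it is quoted verbatim as \cite[Theorem 1.2]{ckkv2019}, and the authors only recall, in the proof of Theorem \ref{main_thm}, that the shift is obtained by solving the ODE (3.2) of \cite{ckkv2019}, referring the reader there (Section 3.1 and Appendix A of \cite{ckkv2019}) for the actual construction. There is therefore no proof in this paper to compare against; what one can say is that your sketch is broadly aligned with the $a$-contraction-with-shift strategy that the cited reference uses.

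That said, beyond the incompleteness you yourself flag in the ``weighted nonlinear Poincar\'e-type estimate,'' the sketch contains a structural error that would prevent it from closing even if every remaining computation were carried out. You set $\dot X-\s=-\vep^{-2}\mathcal{Y}[V]$ and then declare that the resulting non-positive term $-\vep^{-2}\mathcal{Y}^{2}$ ``leaves the estimate entirely,'' reducing matters to the \emph{unconditional} pointwise inequality $\mathcal{B}[V]\le\mathcal{D}[V]-\delta_0\int a\,n\,|\pa_\xi\log(n/\tiln)|^{2}$. That inequality does not hold for all $V$. The weighted good term $-\int a'(\cdots)^{2}$ (of amplitude $\sim\lam$) has a near-kernel, namely perturbations aligned with an infinitesimal translation of the profile (roughly parallel to $\tilu'$); in that direction the good term degenerates, and your ``$\lam\gg\vep$'' heuristic has nothing to dominate with. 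The whole point of the $a$-contraction construction is to \emph{retain} the $-M\vep^{-2}\mathcal{Y}^{2}$ dividend from the shift and run a dichotomy: when $|\mathcal{Y}|$ is small, a sharp Poincar\'e inequality on the complement of the kernel plus the diffusion dominates $\mathcal{B}$; when $|\mathcal{Y}|$ is not small, $-M\vep^{-2}\mathcal{Y}^{2}$ absorbs $\mathcal{B}$ directly. The same dichotomy is also what handles the split you correctly identify between the regimes $|n/\tiln-1|$ small (where \eqref{Q_loc_} gives $\Pi\sim|n-\tiln|^{2}$) and large (where by \eqref{Q_global_} only $\Pi\sim1+n\log^{+}(n/\tiln)$ is available and quadratic absorption is impossible): the latter regime is controlled by the $\mathcal{Y}^{2}$ term, not by Cauchy--Schwarz. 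Having discarded that term, your reduction cannot close.
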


  \begin{remark}\label{rem_validity of diffusion term}
The diffusion term in 
\eqref{ineq_contraction} makes sense for
 solutions   $U$  of \eqref{nq_nu} in the class  $\mathcal{X}_T$. Indeed, 
we find 
\[
\pa_x \Big(\log\frac{n(t, x+Y(t))}{\tiln}\Big) \in L^2((0,T)\times\bbr)\] for any continuous and bounded function $Y:[0,T]\rightarrow\bbr$. It follows from $\partial_x n \in L^2((0,T)\times\bbr)$,   $n^{-1}\in L^\infty((0,T)\times\bbr)$,  $\tilde n\in L^\infty(\bbr)$, and $\tilde n' \in L^2(\bbr)$.
\end{remark}


  \begin{remark}\label{rem_bound_of_shift}
 The estimate \eqref{est_shift} implies
 $$
 |X(t)|\leq \breve{C} \cdot\Big(
 \int_\bbr \eta(U_0| \tilu) dx
 +1\Big)\cdot(t+1)$$ for any $t\in[0,T]$  where the constant $\breve{C}$ depends only on   the initial parameters  $n_-,q_-,\eps,$ and $\lambda$. In particular, the constant $\breve{C}$ is independent of $T$. 
\end{remark}

 \section{Proof of Theorem \ref{main_thm}}\label{sec_pf_main_thm}
In this section, we present the proof of Theorem \ref{main_thm}. 

 \subsection{Local existence in $H^1$}\label{subsec_lwp}
 We first present the local-in-time existence.
 \begin{proposition}\label{prop_lwp}
Let two given constant states $(n_-,q_-)\in\bbr^+\times\bbr$ and
 $(n_+,q_+)\in\bbr^+\times\bbr$ satisfy
 $n_-\neq n_+$ and \eqref{end-con}. 
 Consider the traveling wave $\tilde U=\bmat\tiln \\ \tilq\emat$  of \eqref{nq}  with the boundary condition \eqref{bdry_cond} and  with the speed $\s$ from \eqref{sigma_eq}.
 For any  $M_0>0$ and any $r_0>0$, there exists ${\hat{T}}>0$ such that   the following is true:\\
For any initial datum  $U_0 =\bmat n_0\\ q_0\emat$ satisfying
\be \label{local:ini}
\|U_0-\tilde U\|_{ H^1(\mathbb{R})}\leq M_0\quad\mbox{and}\quad \inf_\bbr n_0 \ge r_0,
\ee 
there exists the unique  solution $U =\bmat n\\ q\emat$ to \eqref{nq} on $[0,{\hat{T}}]$ with the initial datum $(n_0,q_0)$ such that    
\beq\label{know_2}
(n-\hatn, q-\hat q) \in\left(C([0,{\hat{T}}];H^1(\bbr))\cap L^2(0,{\hat{T}};H^2(\bbr))\right)\times C([0,{\hat{T}}];H^1(\bbr)),
\eeq
  \be\label{2M0}
\sup_{t\in[0,{\hat{T}}]} \|U(t)-\hatu(t)) \|_{ H^1(\mathbb{R})} \leq 2M_0 \quad\mbox{and}\quad\inf_{t\in[0,{\hat{T}}]} \inf_{x\in\bbr} n(x,t) \ge \frac{r_0}{2}.
\ee 
\end{proposition}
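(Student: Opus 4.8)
The plan is to recast \eqref{nq} as a heat equation for the perturbation coupled to the simple transport-free equation $\pa_t Q = \pa_x N$, solve it by a Picard iteration on a short interval, and recover the lower bound on $n$ by a continuity-in-time argument. \textbf{Reformulation.} Writing $N := n-\hatn$, $Q := q-\hat q$ and using that $\hatu=\tilu(x-\s t)$ solves \eqref{nq}, subtraction gives
\begin{align*}
\pa_t N - \pa_{xx} N &= \pa_x\big(\hatn Q + \hat q N + N Q\big),\\
\pa_t Q &= \pa_x N,
\end{align*}
with $(N,Q)|_{t=0}=(n_0-\tiln,\,q_0-\tilq)\in H^1\times H^1$ of norm $\le M_0$; I will solve this on $[0,\hat T]$ with $\hat T=\hat T(M_0,r_0)$ small. \textbf{Iteration.} Given $(N^k,Q^k)$, let $N^{k+1}$ solve the linear heat equation $\pa_t N^{k+1}-\pa_{xx}N^{k+1}=\pa_x(\hatn Q^k+\hat q N^k+N^k Q^k)$ with datum $N_0$, and set $Q^{k+1}(t):=Q_0+\int_0^t\pa_x N^{k+1}(s)\,ds$. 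Linear parabolic theory gives $N^{k+1}\in C([0,\hat T];H^1)\cap L^2(0,\hat T;H^2)$, hence $N^{k+1}\in L^1(0,\hat T;H^2)$ and so $Q^{k+1}\in C([0,\hat T];H^1)$.

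\textbf{Uniform bounds, contraction, and passage to the limit.} Using the cited facts $\tiln,\tilq,\tiln',\tiln'',\tilq'\in L^\infty(\bbr)$ and the one-dimensional embedding $H^1(\bbr)\hookrightarrow L^\infty(\bbr)$ to control the quadratic term $NQ$, the $L^2$- and $H^1$-energy estimates for the heat equation yield $\|N^{k+1}\|_{L^\infty(0,\hat T;H^1)}^2+\|N^{k+1}\|_{L^2(0,\hat T;H^2)}^2\le C M_0^2+\hat T\,P(M_0)$ with $P$ a polynomial, and $\|Q^{k+1}\|_{L^\infty(0,\hat T;H^1)}\le \|Q_0\|_{H^1}+\hat T^{1/2}\|N^{k+1}\|_{L^2(0,\hat T;H^2)}$; for $\hat T$ small this keeps all iterates in the ball $\{\sup_{[0,\hat T]}\|(N,Q)\|_{H^1}\le 2M_0\}$. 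For the differences $\delta N^{k+1}=N^{k+1}-N^k$, $\delta Q^{k+1}=Q^{k+1}-Q^k$ the analogous estimate, carried out only in $L^2$, gives $\sup_{[0,\hat T]}(\|\delta N^{k+1}\|_{L^2}^2+\|\delta Q^{k+1}\|_{L^2}^2)\le C\hat T\sup_{[0,\hat T]}(\|\delta N^{k}\|_{L^2}^2+\|\delta Q^{k}\|_{L^2}^2)$, a contraction in $C([0,\hat T];L^2)^2$ for $\hat T$ small. Passing to the limit (interpolating the $C([0,\hat T];L^2)$ convergence against the uniform $L^\infty H^1\cap L^2H^2$ bound, with Aubin--Lions for time continuity) produces a solution with the regularity \eqref{know_2} and the bound $\sup_{[0,\hat T]}\|U-\hatu\|_{H^1}\le 2M_0$; uniqueness follows from the same difference estimate and Gr\"onwall.

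\textbf{Positivity of $n$.} From the equation $\pa_t N=\pa_{xx}N+\pa_x(\hatn Q+\hat q N+NQ)\in L^2(0,\hat T;L^2)$, so $t\mapsto N(t)$ is $\tfrac12$-H\"older into $L^2$; combined with $\|N\|_{L^\infty(0,\hat T;H^1)}\le 2M_0$ and Gagliardo--Nirenberg this upgrades to $\|N(t)-N(s)\|_{L^\infty(\bbr)}\le C M_0^{1/2}|t-s|^{1/4}$. Since $\|\hatn(t)-\hatn(0)\|_{L^\infty}\le |\s|\,t\,\|\tiln'\|_{L^\infty}$, we obtain $\|n(t)-n_0\|_{L^\infty}\le C\big(M_0^{1/2}\hat T^{1/4}+\hat T\big)$, so choosing $\hat T=\hat T(M_0,r_0)$ small makes the right-hand side $\le r_0/2$, whence $\inf_{[0,\hat T]\times\bbr}n\ge r_0/2$; this closes \eqref{2M0} and in particular guarantees $1/n\in L^\infty$, legitimizing a posteriori all the nonlinear estimates used above.

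\textbf{Main obstacle.} The delicate point is the coupling of the parabolic $N$-equation with $\pa_t Q=\pa_x N$, which has no smoothing and loses one derivative: this loss is exactly compensated by the parabolic gain ($N\in L^2 H^2$ forces $\pa_x N\in L^1_t H^1$), but it forces the contraction to be run in the weaker $L^2$ topology rather than $H^1$, and it demands care that $\hat T$ depend only on $M_0$ and $r_0$ (through the fixed wave) and not on finer features of $U_0$. The uniform-in-$[0,\hat T]$ lower bound on $n$ is the other point: a bare maximum principle is awkward because the drift coefficient $\pa_x q$ is only $L^2$ in $x$, so I rely instead on the H\"older-in-time/Gagliardo--Nirenberg estimate above.
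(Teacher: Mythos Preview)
Your proposal is correct and follows essentially the same Picard-iteration strategy as the paper: both set up the perturbation system for $(N,Q)=(n-\hatn,q-\hat q)$, iterate with a linear heat equation for $N$ coupled to $\pa_t Q=\pa_x N$, close uniform $L^\infty_t H^1$ and $L^2_t H^2$ bounds by energy estimates exploiting $H^1\hookrightarrow L^\infty$, prove convergence in the weaker $L^2$ topology, and recover time continuity via Aubin--Lions.

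The one genuine technical difference is the lower bound on $n$. The paper proves $n^k\ge r_0/2$ uniformly for every iterate via Duhamel's formula, using $\|\Phi(s,\cdot)\|_{L^2}\sim s^{-1/4}$ to bound the correction by $C(M_0)T^{3/4}$. You instead wait until the limit solution is constructed and use $\pa_t N\in L^2_tL^2_x$ to get $\tfrac12$-H\"older continuity into $L^2$, then interpolate with the uniform $H^1$ bound via Gagliardo--Nirenberg to get $\|n(t)-n_0\|_{L^\infty}\le C(M_0)\hat T^{1/4}$. Both arguments yield a threshold for $\hat T$ depending only on $M_0,r_0$; yours is slightly more economical (no need to track positivity along the sequence), while the paper's Duhamel estimate is more hands-on but equally elementary. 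Your remark that positivity ``legitimizes a posteriori'' the earlier estimates is unnecessary, since none of the energy bounds used in the construction actually require $n>0$.
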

\begin{proof}
The proof for local existence of strong solutions to the 1D hyperbolic-parabolic system such as \eqref{nq} follows quite standard methods. For completeness, we present the proof   in Appendix \ref{app:local}.
\end{proof}

\subsection{Proposition \ref{prop_unif} : \textit{a priori} uniform estimates}\label{subsec_unif}
To get the global-in-time existence, we present the main proposition on \textit{a priori} uniform estimates:
\begin{proposition}\label{prop_unif} 
Under the same hypotheses as in Theorem \ref{main_thm}, if $U$ is a solution of \eqref{nq} on $[0,T_0)$ for some $T_0>0$ such that 
\begin{align}
\begin{aligned}\label{ass-sol}
&(n-\tiln, q-\tilq) \in\left(C([0,T] ;H^1(\bbr))\cap L^2(0,T ;H^2(\bbr))\right)\times C([0,T] ;H^1(\bbr)),\\
&\mbox{and}\quad 0<\frac{1}{n}\in L^\infty(0,T;L^\infty(\bbr)),\quad \forall T\in(0,T_0).
\end{aligned}
\end{align} 
Then there exists a constant $C(T_0)$ 
 such that 
\ben
\sup_{t\in[0,T_0)}\|U(t) -\hatu(t)\|_{  H^1(\bbr)}\leq C(T_0) \quad\mbox{and}\quad \sup_{t\in[0,T_0)} \|1/n\|_{L^\infty(\bbr)}\le C(T_0).
\een
\end{proposition}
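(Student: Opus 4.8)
The plan is to run the four‑step program of the introduction: feed the a priori contraction estimate of Proposition~\ref{main_thm_ckkv} into the De Giorgi Lemma~\ref{lem_degiorgi}. Fix $T\in(0,T_0)$ and produce bounds depending only on $T_0$ (and on the fixed data $n_-,q_-,\vep,\lambda$ and the finite norms $\|U_0-\tilu\|_{H^1}$, $\|1/n_0\|_{L^\infty}$), never on $T$; then let $T\uparrow T_0$. \emph{Step 1 (contraction with a bounded shift).} By \eqref{ass-sol} and $H^1(\bbr)\hookrightarrow L^\infty(\bbr)$ the solution lies in the class $\mathcal{X}_T$ of \eqref{sp-T}, and by Remark~\ref{rem_L_2} (note $\vep<\kappa<n_-/15$, so $n_+>n_-/2$) the number $\int_\bbr\eta(U_0|\tilu)\,dx$ is finite; after shrinking $\kappa$ if necessary so that $\sqrt\kappa\le\delta_0$ (with $\delta_0$ from Proposition~\ref{main_thm_ckkv}), the parameter ranges are compatible and that proposition provides a shift $X$ obeying \eqref{ineq_contraction}--\eqref{est_shift}, with $\sup_{[0,T]}|X|\le C_X:=\breve C\,(1+\int_\bbr\eta(U_0|\tilu)\,dx)\,(1+T_0)$ by Remark~\ref{rem_bound_of_shift}, independently of $T$. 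Since $1\le a\le1+\lambda\le2$, \eqref{ineq_contraction} gives, uniformly in $t\in[0,T]$,
\[
\int_\bbr\eta\big(U(t,\cdot-X(t))\,\big|\,\tilu(\cdot-\s t)\big)\,dx\le2\int_\bbr\eta(U_0|\tilu)\,dx,\qquad \int_0^T\!\!\!\int_\bbr n\Big|\pa_x\log\tfrac{n}{\tiln}\Big|^2\,dx\,d\tau\le C,
\]
and the second bound together with $\tiln'\in L^2(\bbr)$ yields $\pa_x\sqrt n\in L^2((0,T)\times\bbr)$ with a $C$‑bound.

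\emph{Step 2 (removing the shift and the weight).} Changing variables $x\mapsto x+X(t)$ in the first bound, it remains to estimate the cost of translating the profile by $|X(t)|\le C_X$, i.e.\ to compare $\eta(U(t,x)\,|\,\tilu(x-\s t))$ and $\eta(U(t,x)\,|\,\tilu(x+X(t)-\s t))$. For the quadratic term $\tfrac12|q-\hat q|^2$ this is immediate from $\tilq'\in L^1\cap L^\infty$; for $\Pi(n\,|\,\tiln(\cdot))$ the delicate point is the $n\log n$ growth of $\Pi$ at infinity (Remark~\ref{rem_L_2}, \eqref{Q_global_}), which I would handle via the monotonicity \eqref{Pi-mono} ($\Pi(n_1|\cdot)$ is monotone on intervals avoiding $n_1$) together with the fact that $\tiln$ is monotone with $n_-/2<\tiln<n_-$, so that translating its argument by $\le C_X$ keeps it in a fixed compact subset of $(0,\infty)$; this gives $\Pi(n_1|\tiln(y_1))\le C\,(1+\Pi(n_1|\tiln(y_2)))$, hence $\sup_{t\in[0,T]}\int_\bbr\eta\big(U(t,\cdot)\,|\,\hatu(t,\cdot)\big)\,dx\le C$. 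In particular $q-\hat q$ is bounded in $L^\infty(0,T;L^2(\bbr))$, with a $C$‑bound.

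\emph{Step 3 ($q\in L^\infty$, then $n,1/n\in L^\infty$, then $H^1$).} I would introduce $w:=n-\pa_x q$, which solves a scalar evolution equation, and use $n>0$ to derive a one‑sided pointwise bound on $w$; decomposing $w$ and $q$ into $L^1+L^\infty$ parts and combining Step~2 with $\pa_x\sqrt n\in L^2$ from Step~1 yields $\|q\|_{L^\infty((0,T)\times\bbr)}\le C$. With $q\in L^\infty$ I would invoke Lemma~\ref{lem_degiorgi} twice. For the upper bound on $n$: the first equation of \eqref{nq} reads $\pa_t n-\pa_{xx}n+(-q)\pa_x n+n\,\pa_x(-q)=0$, so take $m=n$, $p_1=-q$, $p_2=-(q-\hat q)$, $p_3=-\hat q$, $m_1=n-\hatn$, $m_2=\hatn$; then \eqref{p123}--\eqref{assump_lem_de} hold with $R=C$ (using $q\in L^\infty$, $\|q-\hat q\|_{L^\infty_tL^2_x}\le C$, $\tilq'\in L^2(\bbr)$, $\|\hatn\|_\infty\le n_-$; only qualitative finiteness is needed for $n-\hatn\in L^\infty_tL^2_x$, $\pa_x n,\pa_{xx}n,\pa_t n\in L^2_{t,x}$, by the remark after Lemma~\ref{lem_degiorgi}), so $\sup n\le C$. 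For the lower bound, $\rho:=1/n$ satisfies $\pa_t\rho-\pa_{xx}\rho-q\,\pa_x\rho+\rho\,\pa_x q\le0$ (discarding the good term $-2\rho^{-1}|\pa_x\rho|^2\le0$), and Lemma~\ref{lem_degiorgi} applies with $m=\rho$, $p_1=-q$, $p_2=q-\hat q$, $p_3=\hat q$, $m_1=\rho-1/\hatn$, $m_2=1/\hatn$ ($\|1/\hatn\|_\infty\le1/n_+$), giving $\sup(1/n)\le C$. Finally, with $n,1/n,q$ all bounded by $C$, the standard parabolic energy estimate for $(n-\hatn,q-\hat q)$ closes (the parabolic dissipation of the first equation of \eqref{nq} absorbs the coupling with the hyperbolic second one, and $q\in L^\infty$ removes the need for $\pa_x q\in L^\infty$), giving a Gr\"onwall inequality on $[0,T]$ and $\sup_{[0,T]}\|U(t)-\hatu(t)\|_{H^1}\le C$; letting $T\uparrow T_0$ concludes.

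The hard part will be Step~2: promoting the \emph{weighted, shifted} contraction of Proposition~\ref{main_thm_ckkv} to an \emph{unweighted, unshifted} bound on $\int_\bbr\eta(U|\hatu)\,dx$ despite the failure of $\eta(U|\hatu)$ to be comparable to $\|U-\hatu\|_{L^2}^2$ (the $n\log n$ tails of $\Pi$); this is precisely where the boundedness of $X$ from Step~1 and the convexity/monotonicity inequalities of Lemma~\ref{lem_2.8} are used in tandem. The construction of $w$ and the $L^1+L^\infty$ split in Step~3 is a secondary, more computational, obstacle; the two De Giorgi applications and the final energy estimate are then routine, modulo checking the qualitative regularity afforded by \eqref{ass-sol}.
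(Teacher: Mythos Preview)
Your outline matches the paper's proof in Section~4 almost step for step: apply Proposition~\ref{main_thm_ckkv} to get the shifted contraction with a shift bounded on $[0,T_0]$, remove the shift to bound $\int_\bbr\eta(U|\hatu)\,dx$, extract the $L^1+L^2$ decomposition of $n-\hatn$ and the bound $\pa_x\sqrt n\in L^2_{t,x}$, use $w=n-\pa_x q$ and an $L^1+L^\infty$ splitting to get $q\in L^\infty$, feed this into Lemma~\ref{lem_degiorgi} twice (for $n$ and $1/n$), and close with a standard $H^1$ energy estimate. Your identification of the De~Giorgi inputs $(m,p_1,p_2,p_3,m_1,m_2)$ is exactly the paper's.

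There is, however, a genuine gap in your Step~2. The pointwise bound you write, $\Pi(n_1|\tiln(y_1))\le C\,(1+\Pi(n_1|\tiln(y_2)))$, is true but cannot be integrated over $\bbr$: the ``$+1$'' produces $+\infty$, so it does \emph{not} give $\int_\bbr\Pi(n|\hatn)\,dx\le C$. Monotonicity of $\Pi(n_1|\cdot)$ away from $n_1$ (your reading of \eqref{Pi-mono}) is not enough here because for $n(t,x)\in(n_+,n_-)$ the two reference values $\hatn(t,x)$ and $\hatn^X(t,x)$ can lie on opposite sides of $n$, and this is exactly the region of infinite measure. The paper's fix (Section~4.1) is to split the $x$-integral according to whether $|n/\hatn^X-1|<1/2$ or $\ge1/2$: on the near set one uses \eqref{Q_loc_} to pass to $|n-\hatn|^2$ and then $\int_\bbr|\hatn^X-\hatn|^2\,dx\le\eps^2|X|$; on the far set one uses the smallness $\eps<n_-/15$ (so $\hatn^Y\le\tfrac{15}{14}\hatn$) to obtain the set inclusion $\{|n/\hatn^X-1|\ge\tfrac12\}\subset\{|n/\hatn-1|\ge\tfrac18\}\subset\{|n/\hatn^X-1|\ge\tfrac1{20}\}$ and the comparison $1+n\log^+(n/\hatn)\le C(1+n\log^+(n/\hatn^X))$, and then \eqref{Q_global_} converts both sides back to $\Pi$. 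The ``$+1$'' survives, but only over the far set, which has finite measure controlled by $\int\Pi(n|\hatn^X)$.

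A smaller point: your claim in Step~1 that ``$\tiln'\in L^2$'' alone yields a \emph{uniform} bound on $\|\pa_x\sqrt n\|_{L^2_{t,x}}$ is not quite right; passing from the dissipation $\int\!\!\int n|\pa_x\log(n/\hatn^X)|^2$ to $\int\!\!\int|\pa_x\sqrt n|^2$ leaves the error $\int\!\!\int n\,|(\hatn^X)'|^2$, and controlling this with a $T_0$-dependent (not $T$-dependent) constant requires a quantitative $L^1+L^2$ decomposition of $n-\hatn^X$, which comes from the relative-entropy bound. In the paper this is done in Section~4.2 (after removing the shift) and then used in Section~4.3; one can equally well use the \emph{shifted} decomposition directly, but either way the $\pa_x\sqrt n$ bound is downstream of the entropy, not parallel to it.
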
 

The proof of the main Proposition \ref{prop_unif} will be handled in Section \ref{sec_prop}. Based on this Proposition, we here complete the proof of Theorem \ref{main_thm}.\\

\subsection{Proof of Theorem \ref{main_thm}} 
For a given constant state $(n_-,q_-)\in\bbr^+\times\bbr,$ let us take the constants $\delta_0\in(0,1/2)$ and $\hat{C}>0$ from Proposition \ref{main_thm_ckkv}. Then, choose any constant $\kappa>0$ so that $\kappa<\min\{(\delta_0)^2/2,n_-/(15)\}$.  
Consider 
any $(n_+,q_+)\in\bbr^+\times\bbr$ satisfying \eqref{end-con} with $0<|n_--n_+|<\kappa$.\\
Let $\eps:=|n_--n_+|$ and take any $\lambda$ between 
$\frac{\epsilon}{\sqrt{\kappa}}$ and $\sqrt{\kappa}$.
 Note that these constants $\eps, \lambda>0$ satisfy the conditions 
  $\eps\in(0,n_-)$ and $\delta_0^{-1}\eps<\lambda<\delta_0$ 
in 
Proposition \ref{main_thm_ckkv}. Then, we take the constant 
$\breve{C}>0$ from Remark \ref{rem_bound_of_shift}. \\
Consider the traveling wave $\tilu:=\bmat\tiln \\ \tilq\emat$  of \eqref{nq}  with the boundary condition \eqref{bdry_cond} and  with the speed $\s$ from \eqref{sigma_eq}.
Let  $U_0(x):=\bmat n_0(x) \\ q_0(x) \emat $ satisfy 
\ben 
U_0-\tilu \in H^1(\mathbb{R}), \quad n_0>0\mbox{ on } \bbr\quad\mbox{and}\quad
\frac{1}{n_0}\in L^\infty(\bbr).
\een
We observe that Proposition \ref{prop_lwp} together with  Remark 
\ref{rem_L_2}  ensures the (local) existence of  a  solution $U$ 
  of \eqref{nq}   on $[0,\hat{T}]$ for some $\hat{T}>0$
for $U|_{t=0}=U_0$ such that
$$\int_{-\infty}^{\infty} \eta(U_0| \tilu) dx<\infty,$$
$$(n-\hatn, q-\hat q) \in\left(C([0,\hat{T}];H^1(\bbr))\cap L^2(0,\hat{T};H^2(\bbr))\right)\times C([0,\hat{T}];H^1(\bbr)),
$$
$$n>0\mbox{ on } [0,\hat{T}]\times \bbr \quad\mbox{and}\quad\frac{1}{n}\in L^\infty(0,\hat{T};L^\infty(\bbr)).$$

Now, in order to  extend the solution $U$  for all time,
\be\label{assump}\mbox{suppose that there is no global-in-time solution}.\ee Then there exists the finite maximal time interval $[0,T_0)$ for some  $T_0\in({\hat{T}},\infty)$ for the  existence  \textit{i.e.,} there   exists a solution $U$  on $[0,T_0)$ such that
 \begin{align}
\begin{aligned}\label{ass-contra}
&(n-\tiln, q-\tilq) \in\left(C([0,T] ;H^1(\bbr))\cap L^2(0,T ;H^2(\bbr))\right)\times C([0,T] ;H^1(\bbr)),\\
&\mbox{and}\quad 0<\frac{1}{n}\in L^\infty(0,T;L^\infty(\bbr)),\quad \forall T\in(0,T_0),
\end{aligned}
\end{align} 
but  
$$\mbox{either}\quad \sup_{t\in[0,T_0)}\|U(t) -\hatu(t)\|_{  H^1(\bbr)}=\infty\quad \mbox{or}
\quad \inf_{t\in[0,T_0)} \inf_{x\in\bbr} n(x,t) = 0\quad \mbox{holds}.$$ 
However, Proposition \ref{prop_unif} and \eqref{ass-contra} implies 
\[
\sup_{t\in[0,T_0)}\|U(t) -\hatu(t)\|_{  H^1(\bbr)}\leq C(T_0)\quad\mbox{and}\quad \sup_{t\in[0,T_0)} \|1/n\|_{L^\infty(\bbr)}\le C(T_0),
\]
where the constant $C(T_0)$ is independent of $T<T_0$. Therefore, 
$$\sup_{t\in[0,T_0)}\|U(t) -\hatu(t)\|_{  H^1(\bbr)}<\infty\quad \mbox{and}
\quad \inf_{t\in[0,T_0)} \inf_{x\in\bbr} n(x,t) >0,$$ which produces a contradiction to the assumption
\eqref{assump}.
Therefore, we have a global solution. 
 The proof of uniqueness follows the same standard energy method such as   
Step 5 in Appendix \ref{app:local}. It proves the part (i).\\ 

For the part (ii), we first notice that
the global solution 
$U$ belongs to the class $\mathcal{X}_{T}$ (see \eqref{sp-T}) for any $T>0$. Indeed, since $U -\hatu\in  L^\infty(0,T; H^1(\bbr))$ and $  \partial_x \hatu\in  L^\infty(0,{T}; L^2(\bbr))$, we have $\partial_x n\in L^2((0,T)\times\bbr)$, which implies $U\in \mathcal{X}_{T}$. Thus we 
 apply 
Proposition \ref{main_thm_ckkv} (or \cite[Theorem 1.2]{ckkv2019}) for any  arbitrarily large time interval. 
We recall how the shift is constructed in the proof of \cite[Theorem 1.2]{ckkv2019}, on which it is defined in a certain constructive way solving the given O.D.E. defined in \cite[(3.2)]{ckkv2019} uniquely (see the explanation in Section 3.1 and  Appendix A in \cite{ckkv2019}).  Since the right-hand side of (3.2) in \cite{ckkv2019} is well defined uniquely for any time, we can construct a shift $X:[0,\infty)\to\bbr$ with the desired estimates \eqref{ineq_contraction_up_to_infinity} and \eqref{est_shift_up_to_infinity}.\\


Therefore, it only remains to prove Proposition  \ref{prop_unif}.

\section{Proof of Proposition \ref{prop_unif}}\label{sec_prop}
First we note that for any $T\in(0,T_0)$, 
the local solution $U$  we are considering
belongs to the class $\mathcal{X}_{T}$ (see \eqref{sp-T}) thanks to \eqref{ass-sol}.
In this section, $C$ denotes a positive constant which may change from line to line, and depends on the initial data and $T_0$, but independent of $T\in(0,T_0)$. 

\subsection{Uniform bound of the relative entropy}
We will use Proposition \ref{main_thm_ckkv} to show that 
\beq\label{s_rel}
\sup_{t\in [0,T]}\int_\bbr\eta(U(t)|\hatu(t))dx\leq C .
\eeq
For simplicity, we here use the following notation:\\ for any function $f:\mathbb{R}_{\geq0}\times\mathbb{R}\rightarrow\mathbb{R}$ and any shift $X:[0,\infty)\rightarrow \mathbb{R}$, 
 \be\label{superX} f^{\pm X}(t,x):=f(t,x\pm X(t)).\ee
First of all, since Remark \ref{rem_L_2} together with $1/2\leq a \leq 3/2$ yields
\be\begin{split}
\int_\bbr a\eta(U_0|\tilu)dx\leq \int_\bbr \eta(U_0|\tilu)dx\leq C \int_\bbr |U_0-\tilu|^2dx
\leq C\|U_0-\tilu\|^2_{H^1(\bbr)},
\end{split}\ee 
Proposition \ref{main_thm_ckkv} and Remark \ref{rem_bound_of_shift} imply that there exists a function $X$ on $[0,T]$ such that 
\be\label{imp_con}
\begin{split}&\sup_{t\in [0,T]} \int_\bbr a^{-\sigma t}\eta([U(t)]^{-X(t)}|\tilu^{-\sigma t})dx\\
&\quad\quad
+  \int_{0}^{T} \int_{-\infty}^{\infty} a^{-\sigma \tau} [n(\tau)]^{-X(\tau)} \cdot\Big| \pa_x \Big(\log\frac{[n(\tau)]^{-X(\tau)} }{\tiln^{-\sigma \tau}}\Big)\Big|^2 dxd\tau
\leq C\end{split}
\ee
and $$\sup_{t\in [0,T]}|X(t)|\leq C.$$   

For any $t\in[0,T]$, we have
\be\begin{split}\label{initial_decomp}
\int_\bbr\eta(U(t)|\hatu(t))dx&=\int_\bbr\eta(U(t)|\tilu^{-\sigma t})dx
=\int_\bbr\Pi(n(t)|\tiln^{-\sigma t})dx+\frac{1}{2}\int_\bbr |q(t)-\tilq^{-\sigma t}|^2dx.
\end{split}\ee
For the second term in \eqref{initial_decomp}, we have
\be\begin{split}\label{q_trans}
\int_\bbr |q(t)-\tilq^{-\sigma t}|^2dx
&
\leq 2 \int_\bbr |q(t)-\tilq^{X(t)-\sigma t}|^2dx
+2 \int_\bbr | \tilq^{X(t)-\sigma t}-\tilq^{-\sigma t}|^2dx\\
&=2\int_\bbr |[q(t)]^{-X(t)}-\tilq^{ -\sigma t}|^2dx
+2\int_\bbr | \tilq^{X(t) }-\tilq|^2dx\\
&\leq C\int_\bbr a ^{-\sigma t} |[q(t)]^{-X(t)}-\tilq^{ -\sigma t}|^2dx
+2|q_+-q_-|\cdot \int_\bbr | \tilq^{X(t) }-\tilq| dx\\
&\leq C \sup_{t\in [0,T]} \int_\bbr a^{-\sigma t}\eta([U(t)]^{-X(t)}|\tilu^{-\sigma t})dx
+2|q_+-q_-|^2 |X(t)| \\
&\leq C({T_0}+1)\leq C .
\end{split}\ee 
For the first term in \eqref{initial_decomp}, we have
\ben\begin{split}
 &\int_\bbr\Pi(n(t)|\tiln^{-\sigma t})dx 
 \\
 &=  \int_{\{x\in\bbr\,|\,|\frac{n(t)}{\tiln^{X(t)-\sigma t}}-1|<\frac{1}{2}\}}\Pi(n(t)|\tiln^{-\sigma t})dx+\int_{\{x\in\bbr\,|\,|\frac{n(t)}{\tiln^{X(t)-\sigma t}}-1|\geq \frac{1}{2}\}}\Pi(n(t)|\tiln^{-\sigma t })dx =:I_1+I_2 .
\end{split}\een For $I_1$, we use \eqref{Q_global_L2} to have
\ben\begin{split}
I_1&\leq    C\int_{\{|\frac{n(t)}{\tiln^{X(t)-\sigma t}}-1|<\frac{1}{2}\}}|n(t)-\tiln^{-\sigma t}|^2dx .
\end{split}\een  Then, as   in \eqref{q_trans}, we get
\be\begin{split}\label{final_I_1}
I_1
&\leq   C\int_{\{|\frac{n(t)}{\tiln^{X(t)-\sigma t}}-1|<\frac{1}{2}\}}|n(t)-\tiln^{X(t)-\sigma t}|^2dx + C\int_{\{|\frac{n(t)}{\tiln^{X(t)-\sigma t}}-1|<\frac{1}{2}\}}|\tiln^{X(t)-\sigma t}-\tiln^{-\sigma t}|^2dx \\
&\leq   C\int_{\{|\frac{[n(t)]^{-X(t)}}{\tiln^{-\sigma t}}-1|<\frac{1}{2}\}}|[n(t)]^{-X(t)}-\tiln^{-\sigma t}|^2dx + C\int_{\bbr }|\tiln^{X(t)-\sigma t}-\tiln^{-\sigma t}|^2dx \\
&\leq   C\int_{\{|\frac{[n(t)]^{-X(t)}}{\tiln^{-\sigma t}}-1|<\frac{1}{2}\}}\Pi([n(t)]^{-X(t)}|\tiln^{-\sigma t})dx + C\int_{\bbr }|\tiln^{X(t)}-\tiln|^2dx \\
&\leq   C\int a^{-\sigma t}\Pi([n(t)]^{-X(t)}|\tiln^{-\sigma t})dx + |n_--n_+|^2\cdot|X(t)|\leq C({T_0}+1) \leq C ,
\end{split}\ee where we used \eqref{Q_loc_} for the third inequality.\\
For $I_2$, we recall $0<(n_--n_+)<\kappa<n_-/(15)<n_-/4$, and so $n_-<\frac{4}{3}n_+$. Since
$n_+<\tiln<n_-$, we find that for any $Y\in\bbr$,
$$ 
 \hatn^Y\leq   \frac{4}{3}\hatn .
$$
Thus,
$$\frac{n}{\hatn^Y}-1\geq \frac{1}{2}\Rightarrow  \frac{n}{\hatn}-1\geq \frac{1}{8},$$
and
$$\frac{n}{\hatn^Y}-1\leq -\frac{1}{2}\Rightarrow  \frac{n}{\hatn}-1\leq -\frac{1}{3},$$
which yield
\ben\begin{split}&
\{ |\frac{n(t)}{\tiln^{X(t)-\sigma t}}-1|\geq \frac{1}{2}\}=\{|\frac{n(t)}{[\hatn(t)]^{X(t)}}-1|\geq \frac{1}{2}\} 
  \subset
 \{ |\frac{n(t)}{\hatn(t)}-1|\geq \frac{1}{8}\} .
 \end{split}\een   
Thus we get 
\ben\begin{split}
 I_2&=
\int_{\{|\frac{n(t)}{[\hatn(t)]^{X(t)}}-1|\geq \frac{1}{2}\}}\Pi(n(t)|\hatn(t))dx \leq
\int_{\{|\frac{n(t)}{\hatn(t) }-1|\geq \frac{1}{8}\}}\Pi(n(t)|\hatn(t))dx .
 \end{split}\een We drop the $t$ index for simplicity. Then, by \eqref{Q_global_}, we get
 \ben\begin{split}
 I_2& \leq
\int_{\{|\frac{n}{\hatn }-1|\geq \frac{1}{8}\}}\Pi(n|\hatn)dx \leq C\int_{\{|\frac{n}{\hatn }-1|\geq \frac{1}{8}\}}(1+ n\log^+ \frac{n}{\hatn})dx ,\\
 \end{split}\een
Since the assumption $0<(n_--n_+)<\kappa<n_-/(15) $ implies    
\ben\label{1514}
 \hatn^Y\leq   \frac{15}{14}\hatn , 
\een
we have that for any $Y$, 
$$ \{|\frac{n}{\hatn }-1|\geq \frac{1}{8}\}\subset \{|\frac{n}{\hatn^{Y} }-1|\geq \frac{1}{20}\} .$$ 
Observe that for any point on  $\{|\frac{n}{\hatn }-1|\geq \frac{1}{8}\}$ and for any $Y\in\bbr$, we have
\be\label{outside_esti}(1+ n\log^+ \frac{n}{\hatn}) \leq C (1+ n \log^+ \frac{n }{\hatn^{Y}}).\ee Indeed, if $\frac{n}{\hatn}-1<-1/8$, then the estimate \eqref{outside_esti} is trivial due to $n<\hatn$. If  $\frac{n}{\hatn}-1>1/8$ \textit{i.e.} $n>\frac{9}{8}\hatn$, then we have $\frac{n}{\hatn}\leq \frac{15}{14}\cdot\frac{n}{\hatn^Y}$ and $n>\frac{21}{20}\hatn^Y>\hatn^Y $ from 
\eqref{1514},
 so we get 
 \ben\begin{split}&(1+ n\log^+ \frac{n}{\hatn})
 =(1+ n\log  \frac{n}{\hatn}) 
  \leq   (1+n\log\frac{15}{14}+ n \log  \frac{n }{\hatn^{Y}})\\
  &\quad \leq   (1+n_-\cdot \log\frac{15}{14}+ n \log  \frac{n }{\hatn^{Y}})
  \leq   C(1 + n \log  \frac{n }{\hatn^{Y}})=   C(1 + n \log^+  \frac{n }{\hatn^{Y}}).
  \end{split}\een
 Thus, by \eqref{Q_global_},  we get
 \be\begin{split}\label{final_I_2}
 I_2
&\leq C\int_{\{|\frac{n}{\hatn^{X} }-1|\geq \frac{1}{20}\}}(1+ n \log^+ \frac{n }{\hatn^{X}})dx \\
&\leq C
\int_{\{|\frac{n}{\hatn^{X} }-1|\geq \frac{1}{20}\}}\Pi(n |\hatn^{X})dx \leq  C
\int_{\bbr}\Pi(n|\hatn^{X})dx=   C
\int_{\bbr}\Pi(n^{-X}|\hatn)dx  \\
&\leq C
\int_{\bbr}a^{-\sigma  t}\Pi([n(t)]^{-X(t)}|\tiln^{-\sigma t})dx \leq C. \\
 \end{split}\ee
Thus from \eqref{q_trans}, \eqref{final_I_1}, and \eqref{final_I_2}, we have  \be\label{est_no_shift}\sup_{t\in [0,T]}\int_\bbr\eta(U(t)|\hatu(t))dx\leq C({T_0}+1)\leq C,\ee
which gives \eqref{s_rel}.\\

\subsection{Uniform bounds on $\|q-\hat{q}\|_{L^2}$ and $\|n-\hat{n}\|_{L^1+L^2}$}
We will use \eqref{s_rel} to show that 
\be\label{q2}
\|q-\hat{q}\|^2_{L^\infty(0,T;L^2(\bbr))}\leq C,
\ee
and there exists functions $m_1, m_2$ such that 
\be\label{m1m2}
n-\hat{n}=m_1+m_2,\quad \|m_1\|_{L^\infty(0,T;L^1(\bbr))}+\|m_2\|^2_{L^\infty(0,T;L^2(\bbr))}\leq C.
\ee
First of all, the definition of $\eta$ and \eqref{est_no_shift} implies that 
\[
\|q-\hat{q}\|^2_{L^\infty(0,T;L^2)}\leq C({T_0}+1)\leq C.
\]
We define 
\be\label{def_m1_m2}
m_1:=(n-\hatn)\mathbf{1}_{\{|\frac{n}{\hatn}-1|\geq \frac{1}{2}\}} \quad \mbox{ and }\quad
m_2:=(n-\hatn)\mathbf{1}_{\{|\frac{n}{\hatn}-1|< \frac{1}{2}\}}, 
\ee
which yields $n-\hat{n}=m_1+m_2$.\\
We use \eqref{upper_1/2} to have
 \be\begin{split} \label{m1_est} 
& \|m_1\|_{L^\infty(0,T;L^1(\bbr))}=\|(n-\hatn)\mathbf{1}_{\{|\frac{n}{\hatn}-1|\geq \frac{1}{2}\}}\|_{L^\infty(0,T;L^1(\bbr))}=\sup_{t\in[0,T]}\int_{\{|\frac{n}{\hatn}-1|\geq \frac{1}{2}\}} |n(t)-\hatn(t)|dx \\&\quad
\leq C
\sup_{t\in[0,T]}\int_{\{|\frac{n}{\hatn}-1|\geq \frac{1}{2}\}}\Pi(n(t)|\hatn(t))dx 
\leq C
\sup_{t\in[0,T]}\int_{\bbr}\Pi(n(t)|\hatn(t))dx \leq C({T_0}+1)\leq C.
\end{split}\ee  
Using \eqref{Q_loc_}, we have
 \be\begin{split}\label{m2_est}
&\|m_2\|^2_{L^\infty(0,T;L^2(\bbr))}=\|(n-\hatn)\mathbf{1}_{\{|\frac{n}{\hatn}-1|< \frac{1}{2}\}} \|^2_{L^\infty(0,T;L^2(\bbr))} 
=\sup_{t\in[0,T]}\int_{\{|\frac{n}{\hatn}-1|< \frac{1}{2}\}} |n(t)-\hatn(t)|^2dx \\&\quad
\leq C
\sup_{t\in[0,T]}\int_{\{|\frac{n}{\hatn}-1|<\frac{1}{2}\}}\Pi(n(t)|\hatn(t))dx 
\leq C
\sup_{t\in[0,T]}\int_{\bbr}\Pi(n(t)|\hatn(t))dx \leq C({T_0}+1)\leq C .
\end{split}\ee
Therefore, we have \eqref{m1m2}.

\subsection{Uniform bound on $\|\partial_x\sqrt{n}\|_{L^2}$}
We will use \eqref{m1m2} and \eqref{imp_con} to get that
\beq\label{dsn2}
\int_{0}^{T} \int_{-\infty}^{\infty} |\pa_x\sqrt{n}|^2 dxd\tau \leq C.
\eeq
First, we find from \eqref{imp_con} that  
\begin{align*}
\begin{aligned}
 \int_{0}^{T} \int_{-\infty}^{\infty} a^{X(\tau)-\sigma \tau} n \Big| \pa_x \Big(\log\frac{n }{\tiln^{X(\tau)-\sigma \tau}}\Big)\Big|^2 dxd\tau  \leq C .
\end{aligned}
\end{align*}
Observe that for any $Y\in\bbr$,
 \ben\begin{split}
n\Big|\partial_x\Big(\log\frac{n}{\tiln^Y}\Big) \Big|^2 &=
\frac{1}{(\tiln^Y)^2}\cdot\frac{|(\partial_xn)\tiln^Y-n\partial_x\tiln^Y|^2}{n}
= \frac{4}{\tiln^Y}\cdot
\frac{1}{4(\tiln^Y)^3}\cdot\frac{|(\partial_xn)\tiln^Y-n\partial_x\tiln^Y|^2}{n}\\
&= \frac{4}{\tiln^Y}\cdot
 \Big| 
\partial_x\sqrt{\frac{n}{\tiln^Y}} 
 \Big|^2.
\end{split}\een 
Then, using the fact that $a$ and $\tiln$ are bounded from below and above by a positive constant, we have
\beq\label{int-diff}
\int_{0}^{T} \int_{-\infty}^{\infty} \left| \pa_x \sqrt{\frac{n }{\tiln^{X(\tau)-\sigma \tau}} }\right|^2 dxd\tau \le C.
\eeq
Note, for any $Y\in \bbr$,
\begin{align*}
\begin{aligned}
 \left| \pa_x \sqrt{\frac{n }{\tiln^Y}}\right| &=\left| \frac{((\pa_x n) \tiln^Y- n (\tiln^Y)')/ (\tiln^Y)^2}{ 2 \sqrt{n/\tiln^Y1}} \right| \ge C^{-1} \left| \frac{(\pa_x n) \tiln^Y- n(\tiln^Y)'}{ \sqrt{n}} \right| \\
&\ge C^{-1}(2\tiln^Y|\pa_x\sqrt{n}|-|(\tiln^Y)'|\sqrt{n}) ,
\end{aligned}
\end{align*}
and thus,
\[
|\pa_x\sqrt{n}| \le C  \left| \pa_x \sqrt{\frac{n }{\tiln^Y}}\right| +  C|(\tiln^Y)'|\sqrt{n}.
\]
Thus we have
\begin{align*}
\begin{aligned}
&\int_{0}^{T} \int_{-\infty}^{\infty} |\pa_x\sqrt{n}|^2 dxd\tau \\
& \le C \int_{0}^{T} \int_{-\infty}^{\infty} \left| \pa_x \sqrt{\frac{n }{\tiln^{X(\tau)-\sigma \tau}} }\right|^2 dxd\tau
+ C \underbrace{\int_{0}^{T} \int_{-\infty}^{\infty} |(\tiln')^{X(\tau)-\sigma \tau}|^2 n ~ dxd\tau }_{=:J}.
\end{aligned}
\end{align*} 
To control $J$, using
\beq\label{n-decom}
|n|\leq|n-\hat n|+|\hat n|\leq |m_1| +|m_2|+|\hat n| ,
\eeq
and $\tiln' \in L^\infty(\bbr)\cap L^2(\bbr)$, together with \eqref{m1m2}, we have
\[
J\le C\cdot {T_0}\cdot
 \Big( \|m_1\|_{L^\infty(0,T;L^1(\bbr))}  +\|m_2\|_{L^\infty(0,T;L^2(\bbr))} +1\Big) \le C {T_0} ({T_0}+1).
\]
This and \eqref{int-diff} yields
\be\label{sqrt_n_diss}
\int_{0}^{T} \int_{-\infty}^{\infty} |\pa_x\sqrt{n}|^2 dxd\tau \le C({T_0}+1)^2\leq C.
\ee

\subsection{Uniform bound on $\|q\|_{L^\infty}$ }
In order to get the uniform bounds for $\|n\|_{L^\infty(0,T;L^\infty(\bbr))}$ and $\|1/n\|_{L^\infty(0,T;L^\infty(\bbr))}$, we may first get $\|q\|_{L^\infty(0,T;L^\infty(\bbr))}\le C$ and then apply Lemma \ref{lem_degiorgi}.
So we will here show 
\be\label{q-est_infty}
\|q\|_{L^\infty(0,T;L^\infty(\bbr))} \leq C .
\ee
For that, we first use \eqref{n-decom} to find that for any $x\in\bbr$ and $t\in[0,T]$,
\begin{align*}
\begin{aligned}
\|n(t)\|_{L^1([x-1,x+1])} & \le \|m_1\|_{L^\infty(0,T;L^1(\bbr))} + \|m_2\|_{L^\infty(0,T;L^1([x-1,x+1]))} + 2\|\tiln\|_{L^\infty(\bbr)} \\
&\le \|m_1\|_{L^\infty(0,T;L^1(\bbr))} +\sqrt{2} \|m_2\|_{L^\infty(0,T;L^2(\bbr))} + 2\|\tiln\|_{L^\infty(\bbr)} .
\end{aligned}
\end{align*} 
So we have 
\beq\label{int-n}
\sup_{t\in[0,T]}\sup_{x\in\bbr}\|n(t)\|_{L^1([x-1,x+1])}\leq C({T_0}+1)\leq C.
\eeq
Since $$n(t,x)=n(t,y)+\int_y^x(\partial_xn)(t,z)dz$$ and 
$$(\partial_xn)=2\sqrt{n}\partial_x\sqrt{n},$$
we have
\[
n(t,x)=\frac{1}{2}\int_{x-1}^{x+1} n(t,y) dy + \int_{x-1}^{x+1} \int_y^x \sqrt{n} \partial_x\sqrt{n} dzdy.
\]
Then, we use \eqref{dsn2} and \eqref{int-n} to have
\begin{align*}
\begin{aligned}
n(t,x) &\le \frac{1}{2}\int_{x-1}^{x+1} n(t,y) dy + \int_{x-1}^{x+1} \int_{x-1}^{x+1} \sqrt{n}|\partial_x\sqrt{n}| dzdy \\
&\le  \frac{1}{2} \|n(t)\|_{L^1([x-1,x+1])} + 2\sqrt{\int_{x-1}^{x+1} |n| dz} \sqrt{\int_{x-1}^{x+1} |\partial_x\sqrt{n}|^2 dz} \\
&\le  \frac{3}{2}\sup_{x\in\bbr} \|n(t)\|_{L^1([x-1,x+1])} + \|\partial_x\sqrt{n(t)}\|_{L^2(\bbr)}^2,
\end{aligned}
\end{align*} 
and thus,
\begin{align}
\begin{aligned}  \label{int-nest} 
\|n\|_{L^1(0,T;L^\infty(\bbr))} & \le \frac{3}{2}{T_0} \sup_{t\in[0,T]}\sup_{x\in\bbr}\|n(t)\|_{L^1([x-1,x+1])} +\|\partial_x\sqrt{n}\|_{L^2(0,T;L^2(\bbr))}^2 \\
&\le C({T_0}+1)^2\leq C .
\end{aligned}
\end{align} 

We now introduce, to show \eqref{q-est_infty}, a new variable
\[
w:=n-\partial_x q .
\]
Then, it follows from \eqref{nq} that
\be\label{w-eq}
\partial_tw+nw= n^2+q\partial_x n.
\ee
Since $n>0$, we have 
\beq\label{w-ineq} \partial_t|w|\leq n^2+|q\partial_x n|.\eeq
To estimate $n^2$, 
 we observe  
\[
n^2=n(n-\hat{n}+\hat{n})=n(m_1+m_2+\hat{n})=
\underbrace{nm_1 }_{=:k_1}+\underbrace{n(m_{2}+\hatn)}_{=:k_2}.
\] 
Since $|m_2|=|\hatn(\frac{n}{\hatn}-1)\mathbf{1}_{\{|\frac{n}{\hatn}-1|< \frac{1}{2}\}}| \leq  \frac{n_-}{2}\leq C$, we have \ \\
$$\||m_{2}|+\hatn\|_{L^\infty(0,T;L^\infty(\bbr) )}\leq C.$$ 
By \eqref{int-nest} and \eqref{m1m2}, 
we have
$n^2=k_1+k_2$ with
\beq\label{n2}\|k_1\|_{L^1(0,T; L^1(\bbr))}+\|k_2\|_{L^1(0,T; L^\infty(\bbr))}\leq C({T_0}+1)^3\leq C.\eeq
To estimate $|q \partial_x n|$, we first observe that
since $\partial_x n=2\sqrt{n}\partial_x\sqrt{n}$ with    
$$\|\pa_x\sqrt n\|_{L^2(0,T;L^2(\bbr))}\leq C({T_0}+1)\leq C$$ 
by \eqref{dsn2} and $$\|\sqrt{n}\|_{ L^2(0,T;L^\infty(\bbr))}\leq C({T_0}+1)\leq C$$ by \eqref{int-nest}, 
  we have
\[
\|\partial_x n \|_{L^1(0,T;L^2(\bbr))} \le C({T_0}+1)^2\leq C.
\] It implies
$$ 
\|(q-\hat q)\cdot\pa_x n\|_{L^1(0,T; L^1(\bbr) ) }  
\le C({T_0}+1)^{5/2}\leq C .
$$
Note  $\sqrt{n}\leq \sqrt{|m_1|}+\sqrt{|m_2+\hatn|}
$ from $n=m_1+m_2+\hatn$ with
$$\|\sqrt{|m_1|}\|_{ L^\infty(0,T;L^2(\bbr))}
\leq \|{m_1}\|^{1/2}_{ L^\infty(0,T;L^1(\bbr))} \leq C({T_0}+1)^{1/2}\leq C$$ and
$\|\sqrt{|m_2+\hatn|}\|_{ L^\infty(0,T;L^\infty(\bbr))}\leq C$.\\
Thus we get
$|\partial_x n|=2|\sqrt{n}|\cdot|\partial_x\sqrt{n}|\leq C
\Big(\underbrace{|\sqrt{|m_1|}|\cdot|\partial_x\sqrt{n}|}_{=:h_1}
+\underbrace{(|\sqrt{|m_2+\hatn|}|\cdot|\partial_x\sqrt{n}|}_{=:h_2}
\Big)
$ with
$$\|
h_1
\|_{ L^1(0,T;L^1(\bbr))}
   \leq \sqrt{{T_0}}\cdot\|
h_1
   \|_{ L^2(0,T;L^1(\bbr))}
   \leq C({T_0}+1)^{2}\leq C$$  and
   $$\|
h_2
   \|_{ L^2(0,T;L^2(\bbr))}
      \leq C({T_0}+1)\leq C $$
We put $h_2=\underbrace{h_2\mathbf{1}_{\{|h_2|>1\}}}_{=:h_{2,1}}+\underbrace{h_2\mathbf{1}_{\{|h_2|\leq1\}}}_{=:h_{2,2}}$, then we get
   $$\|
h_{2,1} 
   \|_{ L^1(0,T;L^1(\bbr))}\leq \|
h_{2,1} 
   \|^2_{ L^2(0,T;L^2(\bbr))}
      \leq C({T_0}+1)^2\leq C $$ and $$\|
h_{2,2} 
   \|_{ L^1(0,T;L^\infty(\bbr))}
      \leq {T_0}\cdot  \|
h_{2,2} 
   \|_{ L^\infty(0,T;L^\infty(\bbr))}
      \leq C\cdot {T_0}. \leq C.$$
    Thus we have
    $|\hat q \pa_x n|\leq C |\hat q|\cdot (h_1 +h_{2,1}+h_{2,2})=
\underbrace{   C |\hat q|\cdot (h_1 +h_{2,1})}_{=:l_1}  +\underbrace{C|\hat q|\cdot  h_{2,2}}_{=:l_2} $ with
   $$\|
l_{1} 
   \|_{ L^1(0,T;L^1(\bbr))}\leq  C({T_0}+1)^2 \leq C$$ and
   $$\|
l_{2} 
   \|_{ L^1(0,T;L^\infty (\bbr))}\leq  C\cdot {T_0}\leq C $$ and

In sum, we have
$|q\partial_x n|\leq\underbrace{|(q-\hat q)\cdot\pa_x n|}_{=:l_0}+l_1+l_2$ with
 \beq\label{qdn}
\|l_0+l_1 \|_{L^1(0,T; L^1(\bbr) ) } +
\|l_2 \|_{L^1(0,T;  L^\infty(\bbr)) } 
\le C({T_0}+1)^{5/2}\leq C .
\eeq

Therefore, it follows from \eqref{w-ineq}, \eqref{n2} and \eqref{qdn} that $$\pa_t |w|\leq \underbrace{(k_1+l_0+l_1)}_{=:w_1}+\underbrace{(k_2+l_2)}_{=:w_2}$$ with
 \ben
\|w_1\|_{L^1(0,T; L^1(\bbr) ) } +
\|w_2 \|_{L^1(0,T;  L^\infty(\bbr)) } 
\le C({T_0}+1)^{3} \leq C.
\een
Moreover, since

$w_0=n_0-\pa_x q_0=
\underbrace{-\pa_x(q_0-\tilq)\mathbf{1}_{\{|\pa_x(q_0-\tilq)|>1\}}}_{=:j_1}+\underbrace{\Big(n_0-\pa_x(q_0-\tilq)\mathbf{1}_{\{|\pa_x(q_0-\tilq)|\leq1\}}-\pa_x\tilq\Big)}_{=:j_2}$
with
$$\|j_1\|_{L^1(\bbr)}\leq \|\pa_x(q_0-\tilq)\|^2_{L^2(\bbr)} 
\leq \|U_0-\tilu\|^2_{H^1(\bbr)} \leq C$$ and
$$\|j_2\|_{L^\infty(\bbr)}\leq \|n_0\|_{L^\infty(\bbr)}+ C
\leq \|n_0-\tiln\|_{L^\infty(\bbr)}+ C\leq \|U_0-\tilu\|_{H^1(\bbr)}+ C\leq C.$$ 

Therefore,
we have 
$|w|\leq i_1+i_2$ with
 \ben
\|i_1\|_{L^\infty(0,T; L^1(\bbr) ) } +
\|i_2 \|_{L^\infty(0,T;  L^\infty(\bbr)) } 
\le C({T_0}+1)^{3}\leq C .
\een
Indeed, for $x\in\bbr$ and for $t\in[0,T]$, we have
$$|w(t,x)|=|w_0(x)|+\int_0^t(\pa_t|w|)(s,x)ds
\leq \underbrace{ |j_1(x)|+\int_0^t w_1(s,x) ds}_{=:\alpha_1(t,x)}
+ \underbrace{\Big(|j_2(x)|+\int_0^t w_2(s,x) ds}_{=:\alpha_2(t,x)}
\Big)$$ with
 \ben
\|\alpha_1\|_{L^\infty(0,T; L^1(\bbr) ) } 
\le \|j_1\|_{L^1(\bbr ) } +\|w_1\|_{L^1(0,T; L^1(\bbr) ) } \le C({T_0}+1)^{3} \leq C
\een and 
 \ben
\|\alpha_2\|_{L^\infty(0,T; L^\infty(\bbr) ) } 
\le \|j_2\|_{L^\infty(\bbr ) } +\|w_2\|_{L^1(0,T; L^\infty(\bbr) ) } \le C({T_0}+1)^{3} \leq C.
\een
This implies 
\[
|\pa_x q| =| n -w| \leq |n-\hatn|+|\hatn|+|w|=\underbrace{ |m_1| +\alpha_1}_{=:g_1}+\underbrace{\Big(|m_2|+\hat n+\alpha_2 \Big)}_{=:g_2} 
\]
with
 \beq\label{dq-est}
\|g_1\|_{L^\infty(0,T; L^1(\bbr) ) } +
\|g_2 \|_{L^\infty(0,T;  L^\infty(\bbr)) } 
\le C({T_0}+1)^{3}\leq C.
\eeq
Note from \eqref{q2} that $q=q-\hat q +\hat q=\underbrace{(q-\hat q)\mathbf{1}_{\{|q-\hat  q |>1\}}}_{=:f_1}+\underbrace{(q-\hat q)\mathbf{1}_{\{|q-\hat  q |\leq 1\}}+\hat q}_{=:f_2}$
and
\beq\label{q-est_}
\|f_1\|_{L^\infty(0,T; L^1(\bbr) ) } +
\|f_2 \|_{L^\infty(0,T;  L^\infty(\bbr)) } 
\le C({T_0}+1)\leq C.
\eeq
Therefore, using Lemma \ref{lem:infty} below, together with \eqref{q-est_} and \eqref{dq-est}, we have
\[
\|q \|_{L^\infty(0,T; L^\infty(\bbr))} \le C({T_0}+1)^3\leq C .
\]
\begin{lemma}\label{lem:infty}
Let $f$  be any function on $\bbr$ such that $f=f_1+f_2$ and $  |f'|\leq g_1+g_2$ with $f_1,g_1\in L^1(\bbr)$ and $f_2,g_2\in L^\infty(\bbr)$.\\
Then,
 $f\in L^\infty(\bbr)$ with 
\[
\|f\|_{L^\infty(\bbr)}\le 2\Big(
 \|f_1\|_{L^1(\bbr)} +  \|f_2\|_{L^\infty(\bbr)} +\|g_1\|_{L^1(\bbr)} +  \|g_2\|_{L^\infty(\bbr)}  
\Big).
\]
\end{lemma}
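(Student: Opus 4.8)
The plan is to prove the (in fact sharper) bound $\|f\|_{L^\infty(\bbr)}\le B$, where
$B:=\|f_1\|_{L^1(\bbr)}+\|f_2\|_{L^\infty(\bbr)}+\|g_1\|_{L^1(\bbr)}+\|g_2\|_{L^\infty(\bbr)}$, which of course implies the stated estimate. The mechanism is local: at a point $x_0$ where $f$ is large, the pointwise bound $|f'|\le g_1+g_2$ forces $f$ — and hence its $L^1$ part $f_1$ — to stay large on an interval centered at $x_0$ whose half-length we can take comparable to $1/\|g_2\|_{L^\infty}$, and the resulting mass of $f_1$ on that interval must not exceed $\|f_1\|_{L^1}$. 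Throughout I take $f$ to be its locally absolutely continuous representative (natural here, since in the application $f=q(t,\cdot)\in H^1_{\mathrm{loc}}(\bbr)$), so the fundamental theorem of calculus applies.

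By replacing $(f,f_1,f_2)$ with $(-f,-f_1,-f_2)$ — which changes neither $B$ nor the hypotheses — it suffices to bound $f(x_0)$ from above for an arbitrary $x_0\in\bbr$. Set $c:=f(x_0)-\|g_1\|_{L^1}-\|f_2\|_{L^\infty}$. If $c\le 0$ then $f(x_0)\le\|g_1\|_{L^1}+\|f_2\|_{L^\infty}\le B$ and we are done, so assume $c>0$. For any $r>0$ and any $x$ with $|x-x_0|\le r$, the fundamental theorem of calculus gives
$$
f(x)\ge f(x_0)-\int_{x_0-r}^{x_0+r}|f'|\,dy\ge f(x_0)-\|g_1\|_{L^1}-2r\|g_2\|_{L^\infty},
$$
and therefore $f_1(x)=f(x)-f_2(x)\ge c-2r\|g_2\|_{L^\infty}$ for all $x\in[x_0-r,x_0+r]$.

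Now suppose $\|g_2\|_{L^\infty}>0$ and choose $r:=c/(4\|g_2\|_{L^\infty})$, so that $f_1\ge c/2>0$ on the interval $[x_0-r,x_0+r]$, which has length $2r=c/(2\|g_2\|_{L^\infty})$. Integrating the nonnegative function $f_1$ over this interval yields
$$
\|f_1\|_{L^1(\bbr)}\ge\int_{x_0-r}^{x_0+r}f_1\,dx\ge\frac{c}{2\|g_2\|_{L^\infty}}\cdot\frac{c}{2}=\frac{c^2}{4\|g_2\|_{L^\infty}},
$$
hence $c^2\le 4\,\|f_1\|_{L^1}\|g_2\|_{L^\infty}\le\big(\|f_1\|_{L^1}+\|g_2\|_{L^\infty}\big)^2$ by the arithmetic–geometric mean inequality, so $c\le\|f_1\|_{L^1}+\|g_2\|_{L^\infty}$ and thus $f(x_0)=c+\|g_1\|_{L^1}+\|f_2\|_{L^\infty}\le B$. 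In the degenerate case $\|g_2\|_{L^\infty}=0$ the estimate $f_1\ge c>0$ holds on $[x_0-r,x_0+r]$ for \emph{every} $r>0$, forcing $\|f_1\|_{L^1}\ge 2rc\to\infty$, contradicting $f_1\in L^1$; hence $c\le 0$, which was already treated. Taking the supremum over $x_0$ and undoing the sign reduction gives $\|f\|_{L^\infty(\bbr)}\le B\le 2B$.

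There is no genuine obstacle in this lemma; the only points deserving care are the choice of the absolutely continuous representative of $f$ (so that the fundamental theorem of calculus is legitimate) and the trivial degenerate branch $\|g_2\|_{L^\infty}=0$, handled by sending $r\to\infty$. In the paper the lemma is then applied with $f=q(t,\cdot)$, the decomposition $f=f_1+f_2$ from the line preceding \eqref{q-est_}, and $|\partial_x q|\le g_1+g_2$ from \eqref{dq-est}, uniformly in $t\in[0,T]$, which yields \eqref{q-est_infty}.
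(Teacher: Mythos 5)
Your proof is correct, and it takes a genuinely different route from the paper's. The paper simply writes $f(x)=f(y)+\int_y^x f'\,dz$ and averages over $y\in[x-1,x+1]$: the first term is controlled by $\tfrac12\|f_1\|_{L^1}+\|f_2\|_{L^\infty}$ and the double integral of $|f'|$ by $\|g_1\|_{L^1}+2\|g_2\|_{L^\infty}$, giving at once $|f(x)|\le\tfrac12\|f_1\|_{L^1}+\|f_2\|_{L^\infty}+\|g_1\|_{L^1}+2\|g_2\|_{L^\infty}\le 2B$ — a two-line computation with no case distinctions and no optimization over an interval length. You instead argue by contrapositive: if $f(x_0)$ exceeds $\|g_1\|_{L^1}+\|f_2\|_{L^\infty}$ by $c>0$, then $f$, and hence $f_1$, stays $\ge c/2$ on an interval of half-length $r=c/(4\|g_2\|_{L^\infty})$, forcing $\|f_1\|_{L^1}\ge c^2/(4\|g_2\|_{L^\infty})$, whence $c\le\|f_1\|_{L^1}+\|g_2\|_{L^\infty}$ by AM--GM. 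This is a genuine optimization (you choose the interval length to balance the two mechanisms controlling $f_1$), and it buys you the sharper constant $\|f\|_{L^\infty}\le B$ rather than $2B$. The price is the sign reduction, the degenerate branch $\|g_2\|_{L^\infty}=0$, and the explicit choice of $r$, none of which the paper needs since $2B$ is all that is used downstream in the $\|q\|_{L^\infty}$ estimate. Both arguments correctly rely on the locally absolutely continuous representative of $f$, and you are right to flag that point.
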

\begin{proof}
Since
\[
f(x)=f(y)+\int_y^x f' (z)dz ,
\] for any $x,y\in\bbr$,
we have, by taking $\frac{1}{2}\int_{x-1}^{x+1}dx$,
\begin{align*}
\begin{aligned}
|f(x)| &\le \frac{1}{2} \int_{x-1}^{x+1}( |f_1(y)|+|f_2(y)| ) dy +\frac{1}{2} \int_{x-1}^{x+1} \int_{x-1}^{x+1} (|g_1(z)|+|g_2 (z)|) dz dy \\
&\le  \frac{1}{2}  \|f_1\|_{L^1(\bbr)} +  \|f_2\|_{L^\infty(\bbr)} +\|g_1\|_{L^1(\bbr)} + 2\|g_2\|_{L^\infty(\bbr)}.
\end{aligned}
\end{align*} for any $x\in\bbr$.
\end{proof}

\subsection{Uniform bounds on $\|n\|_{L^\infty}$ and $\|1/n\|_{L^\infty}$ }
We now use Lemma \ref{lem_degiorgi} (De Giorgi type lemma) to get uniform bounds on $\|n\|_{L^\infty(0,T;L^\infty(\bbr)}$ and $\|1/n\|_{L^\infty(0,T;L^\infty(\bbr))}$. 
First, to control $\|n\|_{L^\infty(0,T;L^\infty(\bbr))}$, we set
\be\label{eq_put_n}
 m=n,\quad m_1=n-\hat{n},\quad m_2=\hat{n},\quad p_1=-q, \quad p_2=-(q-\hat{q}),\quad\mbox{and}\quad p_3=-\hat{q}.\ee
Since 
$$ \partial_t n - \partial_{xx}n-q \partial_x n - n \partial_x q= 0, $$
the above quantities in \eqref{eq_put_n} satisfy the assumption of Lemma \ref{lem_degiorgi}. More precisely, we use  \eqref{q-est_infty} and \eqref{q2} to estimate
\begin{align}
\begin{aligned}
&\|m|_{t=0}\|_{L^\infty (\bbr)}+\||p_1|+|p_2|+|p_3|+|m_2|\|_{L^\infty((0,{T} )\times \bbr)} 
+\||p_2|+|\partial_x p_3|\|_{L^2((0,{T} )\times \bbr)}\\
&=\|n_0\|_{L^\infty (\bbr)}+\||q|+|q-\hat{q}|+|\hat{q}|+|\hat{n}|\|_{L^\infty((0,{T} )\times \bbr)} 
+\||q-\hat{q}|+|\partial_x \hat{q}|\|_{L^2((0,{T} )\times \bbr)}\\
&\leq \|n_0\|_{L^\infty (\bbr)}+
2\|{q}\|_{L^\infty((0,{T} )\times \bbr)} 
+2\||{\hat{q}}|+|{\hat{n}}|\|_{L^\infty((0,{T} )\times \bbr)} 
+\sqrt{T}\||{q-\hat{q}}|+|{\pa_x\hat{q}}|\|_{L^\infty(0,{T};L^2(\bbr))} \\
&\leq C(T_0+1)\leq C.
\end{aligned}
\end{align}   
Since the above constant $C$ does not depend on $T$, by Lemma \ref{lem_degiorgi}, we obtain
 \be\label{n-est_infty}\|n\|_{L^\infty(0,T; L^\infty(\bbr))}\leq C_{T_0}\leq C.\ee
Similarly, we can obtain
  \be\label{est_1/n}\|\frac{1}{n}\|_{L^\infty(0,T; L^\infty(\bbr))}\leq C_{T_0}\leq C.\ee 
Indeed, in order to apply Lemma \ref{lem_degiorgi}, let
\be\label{eq_put_1/n}
m=1/n,\quad m_1=\frac{1}{n}-\frac{1}{\hat{n}}=\frac{\hat{n}-n}{n\hat{n}},\quad m_2=\frac{1}{\hat{n}},\quad p_1=-q, \quad p_2=(q-\hat{q}),\quad\mbox{and}\quad p_3=\hat{q} .\ee
Notice that it follows from \eqref{nq} and \eqref{ass-sol}
 that
  $$ \partial_t \Big(\frac{1}{n}\Big) - \partial_{xx}\Big(\frac{1}{n}\Big)+(-q) \partial_x \Big(\frac{1}{n}\Big) + \Big(\frac{1}{n}\Big)\partial_x q
  = -\frac{2(\partial_x n)^2}{n^3}\leq  0,\quad\mbox{for a.e. } t\in [0,  T],
  $$ 
where
$\frac{(\pa_x n)^2}{n^3} \in L_t^2L_x^2$ by the interpolation
$\pa_x n\in L^\infty_t L^2_x\cap L^2_tL^\infty_x\subset L^4_tL^4_x$.
Thus,
\eqref{ass-sol} implies that the quantities of \eqref{eq_put_1/n} satisfy \eqref{p123} and \eqref{eq_de}  on $[0,\tilde T]$.\\
Furthermore, the quantities of \eqref{eq_put_1/n} satisfy \eqref{assump_lem_de} as follows: 
  \begin{align*}
\begin{aligned}
&\|m|_{t=0}\|_{L^\infty (\bbr)}+\||p_1|+|p_2|+|p_3|+|m_2|\|_{L^\infty((0,{T} )\times \bbr)} 
+\||p_2|+|\partial_x p_3|\|_{L^2((0,{T} )\times \bbr)}\\
&=\|1/n_0\|_{L^\infty (\bbr)}+\||q|+|q-\hat{q}|+|\hat{q}|+|1/\hat{n}|\|_{L^\infty((0,{T} )\times \bbr)} 
+\||q-\hat{q}|+|\partial_x \hat{q}|\|_{L^2((0,{T} )\times \bbr)}\\
&\leq C(T_0+1)\leq C.
\end{aligned}
\end{align*}  Thus \eqref{est_1/n} follows from Lemma \ref{lem_degiorgi}.

\subsection{Uniform bound on $\|n-\hat{n}\|_{L^2}$ }
We first recall from  \eqref{m1_est}, \eqref{m2_est}, \eqref{n-est_infty} that $m_1=(n-\hatn)-m_2$, and 
 $$\|m_1\|_{L^\infty(0,T; L^1(\bbr))}\leq  
 C({T_0}+1)\leq C,$$
 and 
 $$\|(n-\hatn)-m_2\|_{L^\infty(0,T; L^\infty(\bbr))}\leq 
\|n\|_{L^\infty(0,T; L^\infty(\bbr))}+C\leq C.$$
 Since
\begin{align*}
\begin{aligned}
\int_\bbr|m_1(t)|^2dx=\int_\bbr|m_1(t)|\cdot|
(n-\hatn)-m_2
|dx\leq  C_{T_0}({T_0}+1)\leq C
\end{aligned}
\end{align*} for any $t\in[0,T]$, we get
$$\|m_1\|^2_{L^\infty(0,T; L^2(\bbr))}\leq  
 C_{T_0}({T_0}+1)\leq C.$$
 Thus we have
 $$\|n-\hatn\|_{L^\infty(0,T; L^2(\bbr))}\leq  \|m_1\|_{L^\infty(0,T; L^2(\bbr))}+
\|m_2\|_{L^\infty(0,T; L^2(\bbr))} 
\leq   C_{T_0}({T_0}+1)^{1/2}\leq C$$
by \eqref{m2_est}.\\

\subsection{Uniform bounds on $\|\partial_x n\|_{L^2}$, $\|\partial_x q\|_{L^2}$ and $\|\partial_{xx} n\|_{L^2}$ } 

From the system \eqref{nq}, we do the energy method to obtain
\begin{align*}
\begin{aligned}
&\frac{d}{dt}\Big(\int_\bbr\frac{|\pa_xn|^2}{2}dx
+\int_\bbr\frac{|\pa_xq|^2}{2}dx\Big)+\int_\bbr|\pa_{xx}n|^2dx\\
&\quad\quad=\int_\bbr\Big(
(\pa_x n)(\pa_{xx} n)q+2(\pa_x n)^2\pa_x q-\pa_x[n (\pa_x n)](\pa_{x} q)+(\pa_x q)(\pa_{xx} n)
\Big)dx
\end{aligned}
\end{align*} By integration by parts and using the dissipation term, we get
\begin{align}\label{energy_est}
\begin{aligned}
&\frac{d}{dt}\Big(\int_\bbr\frac{|\pa_xn|^2}{2}dx
+\int_\bbr\frac{|\pa_xq|^2}{2}dx\Big)+\frac{1}{2}\int_\bbr|\pa_{xx}n|^2dx\\
&\quad\quad\leq C\int_\bbr\Big(
|\pa_x n|^2|q|^2+|\pa_x q|^2|n|^2+|\pa_x q|^2)
\Big)dx\\
&\quad\quad\leq C_{T_0}\int_\bbr\Big(
|\pa_x n|^2+|\pa_x q|^2)
\Big)dx
\end{aligned}
\end{align} where we used
\eqref{q-est_infty} and \eqref{n-est_infty}
 in the last inequality. Then by Gr\"onwall's inequality, we get
 \begin{align*}
\begin{aligned}
 \|\pa_x n\|_{L^\infty(0,T;L^2(\bbr))}
+ \|\pa_x q\|_{L^\infty(0,T;L^2(\bbr))} 
 \leq C_{T_0}\leq C.
\end{aligned}
\end{align*} In addition, by \eqref{energy_est}, we obtain
 \begin{align*}
\begin{aligned}
 \|\pa_{xx} n\|_{L^2(0,T;L^2(\bbr))}
  \leq C_{T_0}\leq C.
\end{aligned}
\end{align*}

\subsection{Conclusion}
Since  \begin{align*}
\begin{aligned}
 \|\pa_x \hatn\|_{L^\infty(0,T;L^2(\bbr)}+\|\pa_x \hat q\|_{L^\infty(0,T;L^2(\bbr)}
  \leq C,
\end{aligned}
\end{align*} we get
 \begin{align*}
\begin{aligned}
 \|\partial_x (U-\hatu)\|_{L^\infty(0,T;L^2(\bbr)}
 \leq C_{T_0}\leq C.
\end{aligned}
\end{align*}  
Hence we conclude  $$\sup_{t\in[0,T]}\|U(t) -\hatu(t)\|_{  H^1(\bbr)}\leq C.$$
Note that the above constant $C$ does not depend on  any choice of $T$ satisfying $T<T_0$,
which completes the proof.\\

\hspace{1cm}

\begin{appendix}
\setcounter{equation}{0}
\section{Proof of Lemma \ref{lem_degiorgi}} \label{app:De} 
For any constant $M>2R$, we consider a sequence $(c_k)_{k\ge 0}$ defined by
\[
c_k:=M(1-2^{-k-1}),\quad k\ge 0.
\]
Note that $M>c_{k+1}\ge c_k \ge c_0=M/2> R$ for all $k$, and $\lim_{k\to\infty} c_k = M$.\\
Let
\[
m_k:=(m-c_k)_+,
\]
and
\[
E_k:= \sup_{[0,T]} \int_\bbr m_k^2 dx + \int_0^T \int_\bbr |\pa_x m_k|^2 dxdt.
\]
Note that $E_k$ is well defined since
$$
m-R= |m|-R\leq |m|-|m_2|\leq |m-m_2|=|m_1|\in {L^\infty(0,T; L^2(\bbr))} $$ implies 
$$0\leq m_k=(m-c_k)_+\leq(m-R)_+\leq  |m_1|\in {L^\infty(0,T; L^2(\bbr))} $$
and
$$ |\pa_x m_k|=|\pa_x m \mathbf{1}_{\{m>c_k\}}|\leq |\pa_x m|\in {L^2(0,T; L^2(\bbr))}. $$
Observe that  $E_k$ is non-increasing in $k$ since 
$0\leq m_{k+1}\leq m_k$ and $|\pa_x m_{k+1}|\leq |\pa_x m_k|$ due to
$\{m>c_{k+1}\}\subset \{m>c_k\}$. We also see 
$$\int_\bbr (m-R)_+|_{t=0}dx=0\quad \mbox{and}\quad
\int_\bbr m_k|_{t=0}dx=0 \quad \mbox{for any }  k$$ due to $R\geq \|m|_{t=0}\|_{L^\infty (\bbr)}$.\\

Our goal is to show that there exists $M=M(R,T)>0$ such that
\beq\label{main_de}
\lim_{k\to\infty} E_k =0 .
\eeq
Once we prove it, then we obtain
\[
\sup_{[0, T]} \int_\bbr (m-M)_+^2 dx =0 \quad\mbox{due to}\quad 0\leq (m-M)_+\leq m_k,\quad\mbox{for any }  k,
\]
which gives the desired result.
Therefore, it remains to prove \eqref{main_de} in the following steps.\\

{\bf Step1)} Since for any constant $c$,
\[
\partial_t (m-c) - \partial_{xx} (m-c) + p_1 \partial_x (m-c) + m \partial_x(p_2+p_3)\leq 0,
\] 
$\bar m:=(m-R)_+$ satisfies
\[
\frac{d}{dt} \int_\bbr {\bar m}^2 dx +  \int_\bbr |\pa_x{\bar m}|^2 dx 
\le -\int_\bbr \bar m (\pa_x\bar m) p_1 dx -  \int_\bbr \bar m m \pa_x(p_2+p_3) dx.
\]
Then, using the integration by parts and
\[
\bar m \mathbf{1}_{\bar m>0} = (m-R) \mathbf{1}_{\bar m>0},
\]
we have
\begin{align*}
\begin{aligned}
&\frac{d}{dt} \int_\bbr {\bar m}^2 dx +  \int_\bbr |\pa_x{\bar m}|^2 dx \\
&\quad \le \int_\bbr \Big( |\bar m| |\pa_x{\bar m}| \big( |p_1| +2|p_2| +2|p_3| \big) + R |p_2| |\pa_x{\bar m}| + \bar m R |\pa_x p_3| \Big) dx,
\end{aligned}
\end{align*} 
which yields
\begin{align*}
\begin{aligned}
&\frac{d}{dt} \int_\bbr {\bar m}^2 dx + \frac{1}{2} \int_\bbr |\pa_x{\bar m}|^2 dx \\
&\quad \le C\big(\||p_1|+|p_2|+|p_3|\|_{L^\infty((0,{T} )\times \bbr)}^2
+1 \big) \|\bar m\|_{L^2(\bbr)}^2  + C R^2 \big(\|p_2\|_{L^2(\bbr)}^2 + \|\pa_x p_3\|_{L^2(\bbr)}^2  \big) \\
&\quad \le C\big(R^2
+1 \big) \|\bar m\|_{L^2(\bbr)}^2  + C R^2 \big(\|p_2\|_{L^2(\bbr)}^2 + \|\pa_x p_3\|_{L^2(\bbr)}^2  \big) .
\end{aligned}
\end{align*} 
Therefore, by the Gr\"onwall's inequality 
with \eqref{assump_lem_de} and the fact  
 $\bar m|_{t=0} =0$, there exists a positive constant $C_*=C_*(R,T_0)$ such that
\[
\sup_{[0,T]}\int_\bbr {\bar m}^2 dx + \int_0^T \int_\bbr |\pa_x{\bar m}|^2 dx dt \le C_* .
\]
This together with $c_0\ge R$ implies
\beq\label{inistep}
E_0 \le C_*.
\eeq
{\bf Step2)}  
Since for each $k\ge 1$,
\[
\frac{d}{dt} \int_\bbr |m_k|^2 dx +  \int_\bbr |\pa_x{m_k}|^2 dx 
\le -\int_\bbr m_k (\pa_x m_k) p_1 dx -  \int_\bbr m_k m \pa_x(p_2+p_3) dx,
\]
it follows from the integration by parts with $m_k \mathbf{1}_{m_k >0} = (m-c_k)\mathbf{1}_{m_k >0}$ that
\begin{align*}
\begin{aligned}
&\frac{d}{dt} \int_\bbr |m_k|^2 dx +  \int_\bbr |\pa_x{m_k}|^2 dx \\
&\quad \le \int_\bbr  |\pa_x{m_k}| \Big( |m_k|  |p_1| +   \big(2|m_k| +c_k \big)  \big(|p_2| +|p_3| \big)  \Big) \mathbf{1}_{m_k >0} dx,
\end{aligned}
\end{align*} 
which yields
\begin{align*}
\begin{aligned}
&\frac{d}{dt} \int_\bbr |m_k|^2 dx + \frac{1}{2} \int_\bbr |\pa_x{m_k}|^2 dx \\
&\quad \le C\big(\|p_1\|_{L^\infty((0,{T} )\times \bbr)}^2+\|p_2\|_{L^\infty((0,{T} )\times \bbr)}^2 +\|p_3\|_{L^\infty((0,{T} )\times \bbr)}^2 \big) \int_\bbr  \big( |m_k|^2  +c_k^2 \big) \mathbf{1}_{m_k >0}  dx .
\end{aligned}
\end{align*} 
Thus, using \eqref{assump_lem_de} and $m_k|_{t=0} =0$
\[
E_k \le CR^2 \int_0^T  \int_\bbr  \big( |m_k|^2  +c_k^2 \big) \mathbf{1}_{m_k >0}  dx dt.
\]
Note that since $m_{k-1} \ge c_k-c_{k-1} = M 2^{-k-1}$ when $m_k>0$, we have 
\[
\mathbf{1}_{m_k >0} \le M^{-1}2^{k+1} m_{k-1}  \le ( M^{-1} 2^{k+1} m_{k-1}  )^\beta,\quad \forall \beta>1 .
\] 
This together with $m_k^2 \mathbf{1}_{m_k >0} \leq  m_{k-1}^2 \mathbf{1}_{m_k >0}$ and $c_k\le M$ implies 
\[
E_k \le \frac{CR^216^k}{M^2} \int_0^T  \int_\bbr  |m_{k-1}|^4    dx dt,\quad k\ge1.
\]
But, using $\|m_k\|_{L^\infty(\bbr)} \le C \|m_k\|_{H^1(\bbr)}$ by Sobolev embedding, and
\[
\|m_k\|_{L^4((0,T)\times \bbr)}^4\le \|m_k\|_{L^\infty(0,T;L^2(\bbr))}^2 \|m_k\|_{L^2(0,T; L^\infty(\bbr))}^2,
\]
we find that 
\[
 \frac{C}{1+T}\|m_{k}\|_{L^4((0,{T} )\times \bbr)}^4 \le (E_{k})^2 ,\quad k\ge0.
\]
Therefore, there exists a positive constant $C_1=C_1(R, T_0)$ such that
\[
E_{k+1} \le \frac{C_1 16^k}{M^2} (E_k)^2,\quad\forall k\ge 0.
\]
In particular, putting $C_2=16(C_1+1)>1$,
\[
E_{k+1} \le  \frac{(C_2)^k}{M^2} (E_k)^2,\quad\forall k\ge 0.
\] 
Set $F_k:=E_k/M^2$. Then,
\[
0\leq F_{k+1} \le (C_2)^k (F_k)^2,\quad\forall k\ge 0.
\]
Moreover, since it follows from \eqref{inistep} that
\[
F_0=\frac{E_0}{M^2}\le \frac{C_*}{M^2} \to 0\quad\mbox{as } M\to\infty,
\]
using Lemma \ref{lem_va_de} below, there exists a constant $M>0$ with $M>2R$ such that
\[
\lim_{k\to\infty} F_k =0, \quad  \mbox{ so we get }\quad \lim_{k\to\infty} E_k =0 .
\]
Hence we completes the proof of Lemma \ref{lem_degiorgi}.\\

  The following lemma can be proved in a standard way (or see the proof of \cite[Lemma 1]{vasseur_de})
  \begin{lemma}\label{lem_va_de}\cite[Lemma 1]{vasseur_de}
  For $C>1$ and $\beta>1$, there exists a constant $C_0=C_0(C,\beta)$ such that for every sequence $\{W_k\}_{k=0}^\infty$ verifying $0<W_0<C_0$ and for every $k\geq 0$:
  $$0\leq W_{k+1}\leq C^k{W_k}^\beta,$$ we have
  $$\lim_{k\rightarrow \infty} W_k=0.$$
  \end{lemma}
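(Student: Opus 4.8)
The plan is to prove the lemma by a direct induction showing that the sequence decays geometrically once $W_0$ is below an explicit threshold. First I would posit the natural ansatz $W_k \le W_0\,\theta^k$ for a ratio $\theta\in(0,1)$ to be chosen, and determine under which smallness condition on $W_0$ this bound is propagated by the recursion $0\le W_{k+1}\le C^k W_k^\beta$.

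Feeding the ansatz into the recursion gives $W_{k+1}\le C^k W_0^\beta\theta^{k\beta}$, and the target inequality $W_{k+1}\le W_0\theta^{k+1}$ is equivalent to $C^k W_0^{\beta-1}\theta^{k(\beta-1)}\theta^{-1}\le 1$ for all $k\ge 0$. The key choice is $\theta:=C^{-1/(\beta-1)}$, which lies in $(0,1)$ since $C>1$ and $\beta>1$; with this choice $C^k\theta^{k(\beta-1)}=C^k C^{-k}=1$, so the $k$-dependence cancels and the condition collapses to $W_0^{\beta-1}\theta^{-1}\le 1$, i.e. $W_0\le C^{-1/(\beta-1)^2}$.

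Accordingly I would set $C_0:=C^{-1/(\beta-1)^2}>0$. Assuming $0<W_0<C_0$, the base case $W_0\le W_0\theta^0$ is trivial and the computation above is exactly the inductive step (using $W_k\ge 0$ so that $W_k^\beta$ is monotone), hence $0\le W_k\le W_0\theta^k$ for every $k$; letting $k\to\infty$ and using $\theta\in(0,1)$ yields $\lim_{k\to\infty}W_k=0$. There is essentially no real obstacle: the only point requiring care is picking the ratio $\theta$ so that the geometric decay exactly absorbs the growing factor $C^k$ coming from the recursion, after which the admissible smallness threshold $C_0$ for $W_0$ is simply read off. (Alternatively one may just quote the standard statement already recorded in \cite{vasseur_de}, but the self-contained induction above is the shortest route.)
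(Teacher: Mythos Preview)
Your argument is correct and is precisely the standard induction the paper alludes to (the paper does not spell out a proof but only refers to \cite[Lemma~1]{vasseur_de}); your explicit choice $\theta=C^{-1/(\beta-1)}$ and $C_0=C^{-1/(\beta-1)^2}$ is exactly what makes the $k$-dependence cancel, and the bound $0\le W_k\le W_0\theta^k$ then gives the conclusion.
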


\section{Proof of Proposition \ref{prop_lwp}} \label{app:local}
{\bf Step 1 (Iteration Scheme)} 
We first set
\[
(n^0(t,x), q^0(t,x)) = (n_0(x),q_0(x)).
\]
Then, for $k\ge 1$, and given $(n^{k-1}, q^{k-1})$, we iteratively define $(n^k, q^k)$ as a solution of the following linear system:
\begin{align}
\begin{aligned}\label{k-eq}
&\pa_t n^k =   \pa_{xx} n^k +\pa_x(n^{k-1} q^{k-1}) ,\\
&\pa_t q^k =\pa_x n^k,\\
& (n^k,q^k)|_{t=0} = (n_0,q_0).
\end{aligned}
\end{align} 
By the general theory of the heat equation together with \eqref{local:ini}, for each $k\ge 1$, if $\pa_x (n^{k-1} q^{k-1})\in L^\infty(0,T;L^2(\bbr))$ for some $\tilde{T}>0$, then 
\eqref{k-eq} has a unique solution $(n^k, q^k)$ such that
\[
(n^k-\tiln, q^k-\tilq) \in\left(C^0(0,\tilde{T};H^1(\bbr))\cap L^2(0,\tilde{T};H^2(\bbr))\right)\times C^0(0,\tilde{T};H^1(\bbr)).
\]

{\bf Step 2 (Uniform bound)} 
For convenience, we set 
\begin{align}
\begin{aligned}\label{def-NQ}
&N^k(t,x):=n^k(t,x)-\hat{n}(t,x),\quad \hat{n}(t,x):=\tiln(x-\sigma t),\\
& Q^k(t,x):=q^k(t,x)-\hat{q}(t,x), \quad\hat{q}(t,x):=\tilq(x-\sigma t).
\end{aligned}
\end{align} 
In this step, we will prove the following: for any $M_0>0$, and any initial datum $(n_0,q_0)$ satisfying \eqref{local:ini}, there exists ${T}>0$ such that
\begin{align}
\begin{aligned}\label{step2}
&\sup_{t\in[0,{T}]} \| ( N^k(t),Q^k(t)) \|_{ H^1(\mathbb{R})} \leq 2M_0,\quad \forall k\ge 0,\\
&\| \pa_x N^k \|_{ L^2(0,{T}; H^1(\mathbb{R}))} \le  2M_0,\quad \forall k\ge 1.
\end{aligned}
\end{align}

As the initial step, we first show \eqref{step2} when $k=0$. 
Since $\tiln'$, $\tilq'\in H^1(\bbr)$, and
\begin{align*}
\begin{aligned}
&\| ( N^k(t),Q^k(t)) \|_{ H^1(\mathbb{R})} \le \| N^0(t) \|_{H^1(\bbr)}+\| Q^0(t) \|_{H^1(\bbr)} \\
&\le \| n_0-\tiln \|_{H^1(\bbr)} + \|\tiln -\tiln(\cdot-\s t) \|_{H^1(\bbr)} 
 +\| q_0-\tilq \|_{H^1(\bbr)} + \|\tilq -\tilq(\cdot-\s t) \|_{H^1(\bbr)},
\end{aligned}
\end{align*} 
we use \eqref{local:ini} to have
\[
\| ( N^k(t),Q^k(t)) \|_{ H^1(\mathbb{R})}  \le M_0 + Ct,
\]
where the constant $C$ depends on $\|\tiln'\|_{H^1(\bbr)}$, $\|\tilq'\|_{H^1(\bbr)}$ and $\s$.\\
Thus, taking ${T}>0$ small enough such that $C{T}\le M_0$, we obtain \eqref{step2} when $k=0$.\\
Now, as the inductive step, for any $k\ge 1$, we assume
\beq\label{assNQ}
\sup_{t\in[0,{T}]}\| ( N^{k-1}(t),Q^{k-1}(t)) \|_{ H^1(\mathbb{R})}  \leq 2M_0.
\eeq
Since $(\hat{n},\hat{q})$ is a solution to \eqref{nq}, we use \eqref{k-eq} to find that
\begin{align}
\begin{aligned}\label{NQ}
&\pa_t N^k =   \pa_{xx} N^k +\pa_x(n^{k-1} Q^{k-1}+\hat{q}N^{k-1}) ,\\
&\pa_t Q^k =\pa_x N^k,\\
& (N^k, Q^k)|_{t=0} = (n_0-\hat{n},q_0-\hat{q}).
\end{aligned}
\end{align} 
Since it follows from \eqref{NQ} that
\begin{align*}
\begin{aligned}
&\frac{d}{dt}\int_\bbr \frac{|N^{k}|^2}{2} + \int_\bbr |\pa_xN^{k}|^2= - \int_\bbr  (n^{k-1} Q^{k-1}-\hat{q}N^{k-1}) \pa_x N^{k},\\
&\frac{d}{dt}\int_\bbr \frac{|Q^{k}|^2}{2} = \int_\bbr Q^{k} \pa_xN^k,
\end{aligned}
\end{align*} 
we use Young's inequality to have
\begin{align*}
\begin{aligned}
\frac{d}{dt}\int_\bbr \left(|N^k|^2+|Q^k|^2\right) + \int_\bbr |\pa_xN^k|^2 \le 2\int_\bbr  | n^{k-1} Q^{k-1}-\hat{q}N^{k-1}|^2 +2 \int_\bbr |Q^k|^2.\\
\end{aligned}
\end{align*} 
Since $n^{k-1}\in L^\infty([0,{T}]\times\bbr)$ by \eqref{assNQ} together with Sobolev embedding and the boundedness of $\hat{n}$, we use \eqref{assNQ} again to have
\begin{align*}
\begin{aligned}
\frac{d}{dt}\int_\bbr \left(|N^k|^2+|Q^k|^2\right) + \int_\bbr |\pa_xN^k|^2 &\le 4(\|n^{k-1}\|_{\infty}^2+\|\hat{q}\|_{\infty}^2)\int_\bbr  ( |Q^{k-1}|^2+|N^{k-1}|^2 ) +2 \int_\bbr |Q^k|^2\\
&\le C(M_0) + 2 \int_\bbr |Q^k|^2,
\end{aligned}
\end{align*} 
which implies that for some $C=C(M_0)$,
\begin{align*}
\begin{aligned}
\int_\bbr \left(|N^k(t)|^2+|Q^k(t)|^2\right) + \int_0^t  \int_\bbr |\pa_xN^k|^2  &\le e^{Ct}\int_\bbr \left(|n_0-\tiln|^2+|q_0-\tilq|^2\right) + C t e^{Ct} \\
&\le e^{Ct}  M_0^2 + C t e^{Ct}.
\end{aligned}
\end{align*} 
Thus, taking ${T}$ small again (if needed) such that $\sqrt{e^{C{T}} ( M_0^2 + C {T})} \le 2M_0$, we have
\begin{align*}
\begin{aligned}
&\sup_{t\in[0,{T}]} \| ( N^k(t),Q^k(t)) \|_{ L^2(\mathbb{R})} \leq 2M_0,\\
&\| \pa_x N^k \|_{ L^2(0,{T}; L^2(\mathbb{R}))} \le  2M_0.
\end{aligned}
\end{align*} 
Next, to estimate the higher norm, we use \eqref{NQ} to get
\begin{align*}
\begin{aligned}
&\frac{d}{dt}\int_\bbr \frac{|\pa_x N^{k}|^2}{2} + \int_\bbr |\pa_{xx}N^{k}|^2= - \int_\bbr \pa_x (n^{k-1} Q^{k-1}-\hat{q}N^{k-1}) \pa_{xx} N^{k},\\
&\frac{d}{dt}\int_\bbr \frac{|\pa_x Q^{k}|^2}{2} = \int_\bbr \pa_x Q^{k} \pa_{xx}N^k,
\end{aligned}
\end{align*} 
which gives
\begin{align*}
\begin{aligned}
\frac{d}{dt}\int_\bbr \left(|\pa_x N^k|^2+|\pa_x Q^k|^2\right) + \int_\bbr |\pa_{xx} N^k|^2 \le 2\int_\bbr  | \pa_x(n^{k-1} Q^{k-1}-\hat{q}N^{k-1})|^2 +2 \int_\bbr |\pa_x Q^k|^2.
\end{aligned}
\end{align*} 
Likewise, since \eqref{assNQ} implies
\begin{align*}
\begin{aligned}
\int_\bbr  | \pa_x(n^{k-1} Q^{k-1}-\hat{q}N^{k-1})|^2 
&\le \|\pa_xn^{k-1}\|_{L^2(\bbr)}  \|Q^{k-1}\|_{L^\infty(\bbr)} + \|n^{k-1}\|_{L^\infty(\bbr)}  \|\pa_x Q^{k-1}\|_{L^2(\bbr)} \\
& \quad +  \|\pa_x\hat{q}\|_{L^2(\bbr)}  \|N^{k-1}\|_{L^\infty(\bbr)} + \|\hat{q}\|_{L^\infty(\bbr)}  \|\pa_x N^{k-1}\|_{L^2(\bbr)} \\
& \le C(M_0),
\end{aligned}
\end{align*} 
we have that (for ${T}$ smaller than above if needed)
\begin{align*}
\begin{aligned}
&\sup_{t\in[0,{T}]} \| (\pa_x N^k(t),\pa_xQ^k(t)) \|_{ L^2(\mathbb{R})} \leq 2M_0,\\
&\| \pa_{xx} N^k \|_{ L^2(0,{T}; L^2(\mathbb{R}))} \le  2M_0.
\end{aligned}
\end{align*} 
Therefore, we have \eqref{step2}.\\

{\bf Step 3 (Uniform bound for $1/n$)} 
Since it follows from \eqref{step2} and from Sobolev embedding  that for all $k\ge 1$,
\begin{align*}
\begin{aligned}
&\pa_x(n^{k-1}q^{k-1})= (\pa_x n^{k-1}) q^{k-1}+ n^{k-1} \pa_x q^{k-1}\\
&~=(\pa_x n^{k-1}) (q^{k-1}-\hat{q}) + (\pa_x n^{k-1})\hat{q}+ (n^{k-1}-\hat{n}) \pa_x q^{k-1}+ \hat{n}\pa_x q^{k-1}
\in L^\infty(0,T;L^2(\bbr)),
\end{aligned}
\end{align*} 
and $\|\pa_x(n^{k-1}q^{k-1})\|_{L^\infty(0,T; L^2(\bbr))} \le C(M_0)$,
we use Duhamel's principle to represent
$$n^k(t,x)=\int_\bbr\Phi(t,x-y)n_0(y)dy+\int_0^t\int_\bbr 
\Phi(t-s,x-y)[\pa_x(n^{k-1}q^{k-1})](s,y),
dy\,ds$$
where  $\Phi(t,x)=\frac{1}{\sqrt{4\pi t}}e^{-\frac{|x|^2}{4t}}$ is the heat kernel in 1D.\\
Thus, by Young's inequality, we have the following estimate: for all $x\in\bbr,~ t\in[0,T]$,
\begin{align*}
\begin{aligned}
n^k(t,x)&\geq \inf_\bbr n_0  - \int_0^t \|\Phi(t-s, \cdot)\|_{L^2(\bbr)}\cdot
\|[\pa_x(n^{k-1}q^{k-1})](s,\cdot)\|_{L^2(\bbr)}ds\\
&\geq r_0  - \|[\pa_x(n^{k-1}q^{k-1})]\|_{L^\infty(0,{T}; L^2(\bbr))} \int_0^T \|\Phi(s,\cdot)\|_{L^2(\bbr)}ds\\
&\geq  r_0- C\cdot\|[\pa_x(n^{k-1}q^{k-1})]\|_{L^\infty(0,{T}; L^2(\bbr))}  \cdot T^{3/4}
\ge r_0 -C(M_0) T^{3/4}.
\end{aligned}
\end{align*}  
Therefore, taking ${T}$ small again (if needed) such that $r_0 -C(M_0) T^{3/4}\ge r_0/2$, we have
\beq\label{k-low}
\inf_{t\in[0,T]} \inf_{x\in\bbr} n^k(x,t) \ge \frac{r_0}{2}.
\eeq

{\bf Step 4 (Convergence)} 
We will first prove that the sequence $\{(N^k,Q^k)\}_{k\ge1}$ is Cauchy in $\mathcal{S}$, where
\[
\mathcal{S}:=\left(L^\infty(0,{T};L^2(\bbr))\cap L^2(0,{T};H^1(\bbr))\right)\times L^\infty(0,{T};L^2(\bbr)).
\]
For convenience, we set 
\[
\bar N^k := N^{k+1}-N^k,\qquad  \bar Q^k := Q^{k+1}-Q^k.
\]
Then, using $\bar N^{k-1}=n^k-n^{k-1}$ and $\bar Q^{k-1}=q^k-q^{k-1}$, it follows from \eqref{NQ} that
\begin{align}
\begin{aligned}\label{barNQ}
&\pa_t \bar N^k =   \pa_{xx} \bar N^k +\pa_x(n^{k} \bar Q^{k-1} + q^{k-1} \bar N^{k-1}) ,\\
&\pa_t \bar Q^k =\pa_x \bar N^k,\\
& (\bar N^k, \bar Q^k)|_{t=0} = (0,0).
\end{aligned}
\end{align} 
Thus, using the same estimate as in Step 2, we find that for all $t\le {T}$,
\begin{align*}
\begin{aligned}
&\frac{d}{dt}\int_\bbr \left(|\bar  N^k|^2+|\bar Q^k|^2\right) + \int_\bbr |\pa_x\bar N^k|^2 \\
&\le 4(\|n^{k}\|_{L^\infty([0,{T}]\times\bbr)}^2+\|q^{k-1}\|_{L^\infty([0,{T}]\times\bbr)}^2)\int_\bbr  ( |\bar Q^{k-1}|^2+|\bar N^{k-1}|^2 ) +2 \int_\bbr |\bar Q^k|^2 .
\end{aligned}
\end{align*}  
Using the uniform-in-$k$ bound \eqref{step2} with Sobolev embedding, we have
\[
\frac{d}{dt}\int_\bbr \left(|\bar  N^k|^2+|\bar Q^k|^2\right) + \int_\bbr |\pa_x\bar N^k|^2 \le C(M_0) \int_\bbr  ( |\bar Q^{k-1}|^2+|\bar N^{k-1}|^2 ) +2 \int_\bbr |\bar Q^k|^2 .
\]
Integrating it over $[0,{T}]$, we have
\[
\int_\bbr \left(|\bar  N^k(t)|^2+|\bar Q^k(t)|^2\right)  + \int_0^t \int_\bbr |\pa_x\bar N^k|^2 \le C \int_0^t \Big(\int_\bbr  ( |\bar Q^{k-1}|^2+|\bar N^{k-1}|^2 ) +2 \int_\bbr |\bar Q^k|^2\Big) .
\]
This implies
\[
\int_\bbr \left(|\bar  N^k(t)|^2+|\bar Q^k(t)|^2\right)  + \int_0^t \int_\bbr |\pa_x\bar N^k|^2 \le \frac{C t^k}{k!},\quad \forall t\le {T} .
\]
Therefore, the sequence $\{(N^k,Q^k)\}_{k\ge1}$ is Cauchy in $\mathcal{S}$, which implies that
 there exists a limit $(N^\infty, Q^\infty)$ such that 
\beq\label{strong-con}
\mbox{$(N^k,Q^k) \to (N^\infty, Q^\infty)\quad$ in $\mathcal{S}$.}
\eeq
Furthermore, using \eqref{strong-con} and \eqref{step2}, we have
\begin{align}
\begin{aligned}\label{wlsc}
\sup_{t\in[0,{T}]} \| ( N^\infty(t),Q^\infty(t)) \|_{ H^1(\mathbb{R})} \leq 2M_0,\qquad \| \pa_x N^\infty \|_{ L^2(0,{T}; H^1(\mathbb{R}))} \le  2M_0.
\end{aligned}
\end{align} 

{\bf Step 5 (Existence)} 
Let $n:=N^\infty +\hat{n}$ and $q:=Q^\infty +\hat{q}$. Then, by \eqref{def-NQ} and \eqref{strong-con}, we obtain that 
\beq\label{skcon}
\mbox{$(n^k-\hat n,q^k-\hat q) \to (n-\hat n,q-\hat q)\quad $ in $\mathcal{S}$} ,
\eeq
and 
\beq\label{final-lsc}
 \| (n-\hat{n}, q-\hat{q}) \|_{L^\infty(0,{T};H^1(\mathbb{R}))} \leq 2M_0,\qquad  \pa_x n \in L^2(0,{T}; H^1(\mathbb{R})).
\eeq
This implies $n^k-\hat n \to n-\hat n$ in $L^\infty(0,{T};H^{3/4}(\mathbb{R}))$, and thus $n^k\to n$ in $L^\infty(0,{T};L^\infty(\mathbb{R}))$, which together with \eqref{k-low} yields
\[
\inf_{t\in[0,T]} \inf_{x\in\bbr} n(x,t) \ge \frac{r_0}{2}.
\]
Moreover, \eqref{skcon} and \eqref{final-lsc} together with \eqref{k-eq} imply that $(n,q)$ solves \eqref{nq} with $(n,q)|_{t=0}=(n_0,q_0)$ in the sense of distributions. In particular, \eqref{nq} and \eqref{final-lsc}  yield that $\partial_t(n-\tiln) \in L^2(0,{T};H^2(\bbr))$, which together with Aubin-Lions lemma implies $n-\tiln \in C([0,{T}];H^1(\bbr))$, and thus $q-\tilq \in C([0,{T}];H^1(\bbr))$. \\

{\bf Step 6 (Uniqueness)} 
Let $(n_1,q_1)$ and $(n_2,q_2)$ be solutions to \eqref{nq} with the initial datum $(n_0,q_0)$, and satisfy \eqref{final-lsc}. Then, set
$\bar n := n_1 -n_2,\quad \bar q := q_1 -q_2$. \\
Then, it follows from \eqref{nq} that
\begin{align*}
\begin{aligned}
&\pa_t \bar n =   \pa_{xx} \bar n +\pa_x(n_1 \bar q + q_2 \bar n) ,\\
&\pa_t \bar q =\pa_x \bar n,\\
& (\bar n, \bar q)|_{t=0} = (0,0).
\end{aligned}
\end{align*}
which has the same structure as in \eqref{barNQ}. Thus, using the same estimates as above, we have
\[
\int_\bbr \left(|\bar n(t)|^2+|\bar q(t)|^2\right)  \le C \int_0^{{T}} \int_\bbr \left(|\bar n(t)|^2+|\bar q(t)|^2\right) , \quad \forall t\le {T},
\]
which implies that $n_1 = n_2$ and $q_1 = q_2$ on $[0,{T}]\times\bbr$. \\

\end{appendix}

\bibliography{ckv2019bib}
 
\end{document}